\newcommand{\N}{\mathbb{N}}
\newcommand{\Z}{\mathbb{Z}}
\newcommand{\R}{\mathbb{R}}
\newcommand{\C}{\mathbb{C}}
\newcommand{\bra}[1]{\left( {#1} \right)}
\newcommand{\sqb}[1]{\left[{#1} \right]}
\newcommand{\nor}[1]{\left\| {#1} \right\|}
\newcommand{\ro}[1]{\sqrt{\mathstrut{#1}}}
\newcommand{\m}[1]{\mbox{\boldmath $#1$}}
\newtheorem{thm}{Theorem}[section]
\newtheorem{lem}[thm]{Lemma}
\theoremstyle{remark}
\newtheorem{rem}[thm]{Remark}
\newtheorem{defi}[thm]{Definition}
\newtheorem{ass}[thm]{Assumption}
\newcommand{\diag}{\operatorname{diag}}
\newcommand{\I}{\infty}
\newcommand{\na}{\nabla}
\newcommand{\fy}{\varphi}
\newcommand{\F}{\mathcal{F}}
\newcommand{\pq}{\quad}
\newcommand{\p}{\partial}
\newcommand{\e}{\epsilon}
\newcommand{\De}{\Delta}
\newcommand{\om}{\omega}
\newcommand{\z}{\zeta}
\newcommand{\al}{\alpha}
\newcommand{\supp}{\operatorname{supp}}
\newcommand{\ti}{\tilde}
\newcommand{\lec}{\lesssim}
\newcommand{\EQ}[1]{\begin{equation}\begin{split} #1 \end{split}\end{equation}}
\newcommand{\LR}[1]{\langle #1 \rangle}
\numberwithin{equation}{section}
\title[Randomized final-data problem for S-NLS and GP]
{Randomized final-data problem for\\
	Systems of Nonlinear Schr\"odinger Equations\\
	and the Gross-Pitaevskii Equation}
\author{Kenji Nakanishi
\and Takuto Yamamoto}
\address{Research Institute for Mathematical Sciences 
Kyoto University, Kyoto 606-8502, Japan}
\email{kenji@kurims.kyoto-u.ac.jp}
\address{Research Institute for Mathematical Sciences 
Kyoto University, Kyoto 606-8502, Japan}
\email{takutoya@kurims.kyoto-u.ac.jp}
\date{\today}
\begin{document}

\maketitle

\begin{abstract}
We consider the final-data problem for
systems of nonlinear Schr\"odinger equations \eqref{eq:NLS} with $L^2$ subcritical nonlinearity. 
An asymptotically free solution is uniquely obtained for almost every randomized asymptotic profile in $L^2(\R^d)$, extending the result of J.~Murphy \cite{Murphy} to powers equal to or lower than the Strauss exponent. 
In particular, systems with quadratic nonlinearity can be treated in three space dimensions, and by the same argument, the Gross-Pitaevskii equation in the energy space. 
The extension is by use of the Strichartz estimate with a time weight. 
\end{abstract}

\section{Introduction}
\label{sec:intro}

We study asymptotic behavior for large time of solutions to a system of nonlinear Schr\"odinger equations: \begin{align}
&\tag{NLS}\label{eq:NLS}
i\partial_{t}\m{u}+M\Delta{\m{u}}=\m{f}(\m{u}),
\end{align}
where $\m{u}=(u_{1},\dots,u_{N})(t,x):\R\times\R^{d}\to\C^{N}$ for some $d,N\in\N$ is the unknown function, 
$M=\diag(M_1,\dots,M_N)$ is an $N\times N$ diagonal matrix with non-zero real diagonal elements,
and $\m{f}=(f_{1},\dots,f_{N}):\C^{N}\to\C^{N}$ is a power-type nonlinearity. 
The precise conditions are given in Assumption \ref{ass:1} below.

We are interested in whether the solution $u$ of \eqref{eq:NLS} can be approximated by a free solution for large time, namely the nonlinear scattering problem. 
Let 
\EQ{ \label{def Ut}
 U(t):=e^{itM\Delta}} 
denote the free propagator, and let $X$ be a Banach space embedded in the space of $\C^N$-valued tempered distributions on $\R^d$. We say that a solution $u$ of \eqref{eq:NLS} scatters in $X$ as $t\to\I$ if $U(-t)u(t)$ converges to some final-state $u_+$ in $X$ as $t\to\I$. 

The scattering problem can be divided into two parts: (1) For a given final-state $u_+$, if there is a scattering solution, and (2) For a given initial data $u(0)$, if the solution $u$ scatters. 
We will call (1) the final-data problem, and (2) the initial-data problem. 
There is a huge amount of literature on both. For a brief review, we take the simplest case of \eqref{eq:NLS}, which is the scalar equation with a pure power nonlinearity: for some constants $p>0$ and $\lambda\in\R$, 
\begin{align}
&\label{eq:power-NLS}
i\partial_{t}u+\Delta{u}=\lambda|u|^{p}u. 
\end{align}
This equation is invariant for the scaling with a parameter $a>0$: 
\begin{align}
&
u(t,x)\mapsto a^{\frac{2}{p}}u(a^{2}t,a x). 
\label{def:scaling}
\end{align}

In the mass critical case $p=4/d$, the above scaling preserves the $L^2(\R^d)$ norm of $u$ (the mass), which is a conserved quantity for \eqref{eq:power-NLS}. 
In general, the scattering problem is more difficult for lower $p$, since the free solutions decay by dispersion for large time, although the local existence is more difficult for higher $p$. 
In this paper, we restrict our attention to the $L^2$ subcritical case $p<4/d$, where \eqref{eq:power-NLS} is known to be globally well-posed in $L^2(\R^d)$ for any $\lambda\in\R$, see \cite{Tsutsumi-1987}.  
On the other hand, the scattering is known to fail for $p\le 2/d$, see \cite{Glassey-1973,Strauss,Barab}, so we should restrict to $p>2/d$. 

On the initial-data problem, Tsutsumi and Yajima \cite{T-Y} showed that
if $p>2/d$, $(d-2)p<4$ and $\lambda\ge 0$, then for any initial-data $u(0)$ in the weighted Sobolev space 
\EQ{
 \Sigma:=\{\fy\in L^2(\R^d) \mid \na\fy, x\fy\in L^2(\R^d)\},} 
the solution $u$ scatters in $L^2(\R^d)$. 
If $p\ge p_0(d):=\frac{2-d+\sqrt{d^{2}+12d+4}}{2d}$, then the solution $u$ scatters also in $\Sigma$, see \cite{Tsutsumi-1985,C-W}. This critical number $p_0(d)\in(2/d,4/d)$ is often called by the name of Strauss, for his systematic study \cite{Strauss-1981} of the scattering problem for small data in $L^{(p+2)'}$. 
For small initial data in the weighted space 
\EQ{
 \F H^s(\R^d) := \{ \fy \in L^2_{loc}(\R^d) \mid (1+|x|)^s\fy\in L^2(\R^d)\},}
the solution $u$ scatters in $\F H^s$ if $0\le s<\min(d/2,p)$ and $4/(d+2s)\le p\le 4/d$, see \cite{GOV,NO}. 

On the final-data problem, the first author proved \cite{Nakanishi} existence of a solution $u$ scattering in $L^2(\R^d)$ for any final-data $u_+\in L^2(\R^d)$ for $p\in(2/d,4/d)$ and $d\ge 3$, which was extended to $d=2$ by Holmer and Tzirakis \cite{HT} in the case $u_+\in H^1(\R^2)$ and $\lambda\ge 0$. 
However, the uniqueness of the solution $u$ for a given final-state $u_+$ is an open question in those results, since the proof relies crucially on a compactness argument. 
For $u_+\in\F H^s$, the solution $u$ scattering to $u_+$ in $\F H^s$ uniquely exists, provided that $0<s<\min(d/2,p)$ and $4/(n+2s)\le p<4/d$, see \cite{C-W,Nakanishi}. 

It is worth noting that the uniqueness problem in \cite{Nakanishi} is super-critical in view of the scaling, where the invariant scaling \eqref{def:scaling} implies that neither smallness in $L^2(\R^d)$ nor the limit $t\to\I$ makes the situation better. Indeed, the problem in the region 
\EQ{
 \|u_+\|_{L^2(\R^d)} \le 1, \pq \|u\|_{L^2(\R^d)}\le 1, \pq t\ge 1}
can be reduced by the invariant scaling to the smaller region
\EQ{
  \|u_+\|_{L^2(\R^d)} \le \e, \pq \|u\|_{L^2(\R^d)}\le \e, \pq t\ge 1/\e}
for any $\e>0$, which implies that the latter region is no easier than the former.   
Another way to observe the supercriticality is to invoke the pseudo-conformal inversion $u(t,x)\mapsto(it)^{-d/2}\exp(i|x|^2/(4t))\bar{u}(1/t,x/t)$, which transforms the final-data problem into the Cauchy problem 
\EQ{
 i\dot u + \Delta u = \lambda t^{2-dp/2}|u|^p u, \pq u(0)=\overline{\F u_+} \in L^2(\R^d),}
where $\F$ denotes the Fourier transform unitary on $L^2(\R^d)$. 
In the same way as above, the invariant scaling implies that the local Cauchy problem restricted to  
$\|u\|_{L^2(\R^d)}\le \e$ and $0<t<\e$ 
is no easier for any $\e>0$ than that with $\e=1$. 

Recently, Murphy \cite{Murphy} shed new light on this supercritical problem. 
For arbitrary given final-data $u_+\in L^2(\R^d)$, he introduced a randomization in the physical space, and proved that a scattering solution with some space-time integrability exists uniquely for almost every randomized final-data. 
In view of the pseudo-conformal inversion, it is naturally related to the Cauchy problem for initial data randomized in the Fourier space, for which there is an extensive literature, cf.~\cite{Bourgain,B-T,N-O-R-S,L-M,B-O-P,S-X}, aiming at solutions with less regularity than required by the scaling or the deterministic argument. 
Concerning the initial data problem, almost sure scattering results have been obtained in \cite{D-L-M,D-L-M-2018,K-M-V} for the energy-critical equations (hence in the mass supercritical case) by randomization in the Fourier space. 

The idea behind those works, particularly in \cite{B-T} and the succeeding ones, is first to derive some better properties of the free solution almost surely than the original one before the randomization, and then to consider nonlinear perturbation in a deterministic way exploiting the better properties. 
Murphy \cite{Murphy} proved that the randomized free solutions have better global dispersive properties in the sense of space-time integrability due to the physical randomization, and then the unique existence of a scattering solution for $p\in(p_0(d),4/d)$ by Kato's argument \cite{Ka} using non-admissible Strichartz norms. 

Using a time-weighted Strichartz estimate instead of the non-admissible ones, we can extend Murphy's result to lower powers $p\in(p_1(d),4/d)$, where 
\EQ{
 p_1(d):=\frac{4-d+\sqrt{d^2+24d+16}}{4d} \in (2/d,p_0(d)).}

Since $p_1(3)<1=p_0(3)$ in particular, the above extension allows us to include systems with quadratic polynomial nonlinearity in three space dimensions, which appear in some physical models. 
This is why we consider the system \eqref{eq:NLS}. 
In the deterministic scattering problems in weighted spaces such as $\F H^s$, one often needs precise information about the nonlinear oscillation and resonance, which makes it highly non-trivial to extend results for the scalar equation \eqref{eq:power-NLS} to the system. 
In our case, however, such a fine analysis is not needed in using only a time weight, except for a conservation law of $L^2$-type quantity ensuring global existence of the solution. 
A similar idea of time-weighted norms was used in \cite{G-N-T-07} for the deterministic final-data problem of the Gross-Pitaevskii equation in three space dimension. 

Before stating our main result, we introduce randomization of the final-data to our system \eqref{eq:NLS},  following Murphy \cite{Murphy}. 

\begin{defi}[$L^{2}$-randomization]
\label{def:random-L2}
Let $\chi\in C_{c}^{\infty}(\R^{d})$ be a non-negative bump function with
\begin{align}
\chi(x)
=\begin{cases}
1
&(|x|<1),\\
0
&(|x|>2).
\end{cases}
\end{align}
We define a partition of unity $\{\chi_{k}\}_{k\in\Z^{d}}$ as
\begin{align}
\chi_{k}(x):=\frac{\chi(x-k)}{\sum\nolimits_{\ell\in\Z^{d}}\chi(x-\ell)}.
\end{align}

For each $k\in\Z^{d}$, and $a$, $b\in\{1,\dots,N\}$,
let $g_{a,b}^{k}$ be a mean-zero real-valued random variable with distribution $\mu_{a,b}^{k}$
on a probability space $(\Omega,\mathcal{A},\mathbb{P})$.
Moreover,
for all $a,b\in\{1,\dots,N\}$,
we assume that $\{g_{a,b}^{k}\}_{k\in\Z^d}$ is independent,
and that there exists $C>0$ such that
for all $\gamma\in\R$ and $k\in\Z^{d}$,
it holds that
\begin{align}
\label{ineq:g-condition}
\int_{\R}e^{\gamma x}\,d\mu_{a,b}^{k}(x)
\le e^{C\gamma^{2}}.
\end{align}
For example, we can take a mean-zero Gaussian random variable with a bounded variance $\sigma^k_{a,b}>0$, then $\mu_{a,b}^k(x)=(2\pi\sigma^k_{a,b})^{-1/2}\exp(-\frac{x^2}{2\sigma^k_{a,b}})$ and the above left side is equal to $\exp(\sigma^k_{a,b}\gamma^2/2)$. Another example is the case where the distribution $\mu^k_{a,b}$ is compactly supported. 

Then, for any $\m{u}\in (L^{2}(\R^{d}))^{N}$,
we define its randomization $\m{u}^{\omega}$ as
\EQ{ \label{def rand}
&
\m{u}^{\omega}(x):=\sum_{k\in\Z^{d}}\chi_{k}(x)G_{k}(\omega)\m{u}(x),
\pq
G_{k}(\omega)
:=\begin{bmatrix}
g_{1,1}^{k}(\omega) & \dots & g_{1,N}^{k}(\omega)\\
\vdots & \ddots & \vdots\\
g_{N,1}^{k}(\omega) & \dots &g_{N,N}^{k}(\omega)
\end{bmatrix}.}
\end{defi}

\bigskip

Next we specify the conditions needed for the nonlinearity and $M$.  
\begin{ass}
\label{ass:1}
Let $d,N\in\N$ and $p\in(p_{1}(d),4/d)$. 
We assume that the function $\m{f}:\C^N\to\C^N$ and the matrix $M=\diag(M_1,\dots,M_N)$ satisfy the following (i) and (ii):
\begin{enumerate}
\item[(i)]
$\m{f}(0)=0$ and there exists $C>0$ such that for all $\m{u},\m{v}\in\R^{d}$
\begin{align}
& |\m{f}(\m{u})-\m{f}(\m{v})|
\le C\max_{\m{w}\in\{\m{u},\m{v}\}}|\m{w}|^{p}|\m{u}-\m{v}|.
\end{align}
\item[(ii)]
There exists a positive-definite $N\times N$ Hermite matrix
$\Lambda$ 
satisfying $M\Lambda=\Lambda M$ and 
$\Im(\m{u}^*\Lambda \m{f}(\m{u}))=0$ for all $\m{u}\in\R^{d}$, where $\m{u}^*$ denotes the transposed complex conjugate. 
\end{enumerate}
\end{ass}

Now we are ready to state 
\begin{thm}[Main Theorem]
\label{thm:main}
Under the Assumption \ref{ass:1}, for all $\m{u}_{+}\in(L^{2}(\R^{d}))^{N}$
and for almost every $\omega\in\Omega$,
there exists a unique global solution 
$\m{u}\in C_t([0,\I);L^2(\R^{d}))$ to \eqref{eq:NLS} such that 
\EQ{ \label{Str diff}
 \|t^{\epsilon}(\m{u}-U(t)\m{u}_+^\omega)\|_{L^{q}_{t}L^{r}_{x}((0,\infty)\times\R^{d})}<\infty}
for all $q,r\in[2,\I]$ satisfying
\EQ{ \label{adm qr}
  \frac{1}{q}+\frac{d}{2r}=\frac d4, \pq (d,q,r)\not=(2,2,\I),}
and all $\e>0$ satisfying
\EQ{ \label{rang e}
 \frac{1}{p}-\frac{d}{4} <\epsilon < \frac{dp}{4(p+1)}.}
In particular, choosing $(q,r)=(\I,2)$ in \eqref{Str diff} implies that $\m{u}$ scatters in $L^2(\R^d)$ with the final-state $\m{u}_+^\omega$. 
The uniqueness holds under the condition \eqref{Str diff} for a fixed $(q,r,\e)$ satisfying the above and 
\EQ{ \label{dual qr}
 \max(1,2d/(d+2)) \le \frac{r}{p+1} \le 2, \pq (d,\frac{r}{p+1})\not=(2,1).}
\end{thm}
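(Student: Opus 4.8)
The plan is to set up a contraction-mapping / fixed-point argument for the Duhamel integral equation associated to the final-data problem, using the improved almost-sure dispersive estimates for the randomized free evolution together with a time-weighted Strichartz estimate. Writing the solution as $\m u(t)=U(t)\m u_+^\omega + \m v(t)$ where $\m v(t) = -i\int_t^\infty U(t-s)\m f(\m u(s))\,ds$ is the nonlinear correction, I would work in a complete metric space of functions $\m v$ with $\|t^\e \m v\|_{L^q_tL^r_x((T,\infty)\times\R^d)}$ small, the norm being taken with the admissible pair $(q,r)$ and weight $\e$ fixed as in \eqref{dual qr} and \eqref{rang e}. The first key input is the almost-sure gain: by Murphy's physical-space randomization argument (adapted to the matrix-valued $G_k$, using the sub-Gaussian bound \eqref{ineq:g-condition} and Khinchin/Minkowski to trade the $\ell^2_k$ sum for an $L^p_\omega$ bound), the randomized free solution $U(t)\m u_+^\omega$ lies, for a.e.\ $\omega$, in $L^q_tL^r_x$ with \emph{extra time decay} $t^{-\e'}$ for a range of $\e'$ strictly larger than what the deterministic Strichartz estimate gives — this is precisely the mechanism that compensates for the low power $p$ near $p_1(d)$.

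The second key input is the time-weighted Strichartz estimate: for the inhomogeneous term one needs
$\| t^\e \int_t^\infty U(t-s)F(s)\,ds\|_{L^q_tL^r_x} \lesssim \| s^{-\e} F\|_{L^{\tilde q'}_s L^{\tilde r'}_x}$
(or the appropriate dual-admissible version), where the shift from $s^{+\e}$ on the left to $s^{-\e}$ on the right costs a factor $(s/t)^\e$ inside the integral and is absorbed because on the region $s>t$ one has $s/t\ge 1$ — wait, rather the point is that the kernel's dispersive decay $|t-s|^{-d(1/2-1/r)}$ times the weight ratio is integrable precisely when \eqref{rang e} holds. I would cite or quickly prove this weighted estimate (it follows from the standard $TT^*$ / Hardy-Littlewood-Sobolev argument with the weight inserted, the condition \eqref{rang e} being exactly what makes the fractional-integration exponent admissible). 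Then the nonlinear estimate: using Assumption \ref{ass:1}(i), $|\m f(\m u)|\lesssim |\m u|^{p+1}$, so one estimates $\|s^{-\e}\m f(\m u)\|_{L^{\tilde q'}_s L^{\tilde r'}_x}$ by Hölder in space (splitting $|\m u|^{p+1}$ into $p+1$ factors in $L^r_x$, which needs $\tilde r' = r/(p+1)$, i.e.\ the first condition in \eqref{dual qr}) and in time (distributing the weights $s^{\e}$ from each of the $p+1$ copies of $U(s)\m u_+^\omega + \m v$ against $s^{-\e}$, which needs $-\e - (p+1)(-\e) = p\e$ to beat $1/\tilde q'$ suitably — this is where the \emph{lower} bound $\e>1/p - d/4$ and the admissibility relation conspire). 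Splitting $\m u = U(s)\m u_+^\omega + \m v$ and using the a.s.\ bound on the first piece plus the smallness of $\m v$ in the iteration space closes both the self-map and the contraction, on $(T,\infty)$ for $T$ large (or directly on $(0,\infty)$ after observing the free piece's weighted norm is finite on all of $(0,\infty)$, only possibly large — one runs the fixed point on a large-$T$ tail and then extends).

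Having produced $\m v$ on a tail $(T,\infty)$ with $\m u = U\m u_+^\omega + \m v \in C_t([T,\infty);L^2)$, the next step is global existence backward to $t=0$: here one invokes Assumption \ref{ass:1}(ii), the positive-definite Hermitian $\Lambda$ commuting with $M$ with $\Im(\m u^*\Lambda\m f(\m u))=0$, which yields conservation (or at least a priori control) of the $\Lambda$-weighted mass $\int \m u^*\Lambda\m u\,dx$ along the $L^2$-subcritical flow; combined with the local well-posedness of \eqref{eq:NLS} in $L^2(\R^d)$ for $p<4/d$ (Tsutsumi-type theory, cited in the introduction) this propagates the solution to all of $[0,\infty)$ in $C_tL^2$ without blow-up. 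Finally, uniqueness: given two solutions satisfying \eqref{Str diff} for the fixed $(q,r,\e)$ with \eqref{dual qr}, their difference $\m w$ solves the Duhamel equation with zero data at $t=\infty$, and the same weighted Strichartz + Hölder estimate (now using $|\m f(\m u)-\m f(\m v)|\lesssim \max(|\m u|,|\m v|)^p|\m u-\m v|$ from Assumption \ref{ass:1}(i)) gives $\|t^\e\m w\|_{L^q_tL^r_x(T,\infty)}\le \tfrac12\|t^\e\m w\|_{L^q_tL^r_x(T,\infty)}$ for $T$ large, forcing $\m w\equiv 0$ on a tail and then, by backward $L^2$ uniqueness, everywhere. **The main obstacle** I expect is not any single estimate but the bookkeeping of the exponent constraints — verifying that the interval \eqref{rang e} for $\e$ is non-empty precisely when $p>p_1(d)$, and that one can simultaneously choose an admissible $(q,r)$ meeting \eqref{adm qr}, the Hölder/duality relation $r/(p+1)\in[\max(1,2d/(d+2)),2]$ of \eqref{dual qr}, and the time-integrability budget; threading all of these through is the technical heart, and it is exactly where the gain from the physical-space randomization (the extra $t^{-\e'}$ decay of the free term) must be quantitatively matched against the weighted-Strichartz loss.
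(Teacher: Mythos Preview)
Your plan matches the paper's strategy: almost-sure improved decay for the randomized free evolution (Lemma~2.2), a time-weighted Strichartz estimate with the \emph{same} weight $t^\e$ on both sides (Lemma~2.3), a contraction on the weighted Strichartz ball for the difference $\m v=\m u-U(t)\m u_+^\omega$ on $(T,\infty)$ (Lemma~3.1), and global extension via the $\Lambda$-mass conservation. Two points deserve correction.

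First, the weighted inhomogeneous estimate has $s^{+\e}$ on the right, not $s^{-\e}$: the paper proves
\[
\Bigl\|t^\e\!\int_t^\infty U(t-s)F(s)\,ds\Bigr\|_{L^{q_0}_tL^{r_0}_x(I_T)}\lesssim\|s^\e F\|_{L^{q_1'}_sL^{r_1'}_x(I_T)}
\]
by reducing to the unweighted Strichartz via the identity $t^{\e q}=T^{\e q}+\int_T^t \e q\,\tau^{\e q-1}d\tau$ and Minkowski, rather than by weighted HLS. The nonlinear bookkeeping then runs as follows: H\"older in $x$ gives $\|s^{(p+1)\e}\m f(\m u)\|_{L^{q/(p+1)}_sL^{r/(p+1)}_x}\lesssim\|s^\e\m u\|_{L^q_sL^r_x}^{p+1}$, and a further H\"older in $s$ converts $s^{(p+1)\e}$ to $s^\e$ at the cost of the time-exponent shift $1-\tfrac{dp}{4}$, which is absorbed precisely when $p\e>1-\tfrac{dp}{4}$, i.e.\ the lower bound in \eqref{rang e}. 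Your computation of the net gain $p\e$ is right, but the sign slip obscures why the estimate closes.

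Second, and more substantively, your plan uses a \emph{single} admissible pair $(q,r)$ for both the iteration norm on $\m v$ and the a.s.\ bound on $U(t)\m u_+^\omega$. This works only when $p>\tfrac{4}{d+2}$: the free-term estimate needs $\e_0=\tfrac1q-\e>0$, and over admissible pairs the maximum of $\tfrac1q$ is $\tfrac12$, so one needs $\e<\tfrac12$, forcing $\tfrac1p-\tfrac d4<\tfrac12$. For $d\ge 7$ the interval $(p_1(d),\tfrac{4}{d+2}]$ is nonempty, and there your single-pair contraction cannot close. The paper handles this by allowing a separate pair $(q_0,r_0)$ for the free term that satisfies the scaling relation \eqref{adm qr} and the duality condition \eqref{dual qr} but with $q_0\in(0,2)$ (hence not admissible); the optimal choice $r_0=2(p+1)$ gives $\tfrac1{q_0}=\tfrac{dp}{4(p+1)}$, which matches the upper endpoint of \eqref{rang e} and makes $\e_0>0$ for every $\e$ in the range. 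The randomized dispersive Lemma~2.2 accommodates any $a\in(0,\infty)$, so this non-admissible $(q_0,r_0)$ is legitimate for the free piece; only the iteration pair $(q_1,r_1)$ must be admissible. Without this two-pair split your argument does not cover the full range $p>p_1(d)$ claimed in the theorem.
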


\begin{rem}
\label{rem:p1}
We have $p_{1}(d)<p_{0}(d)$ for all $d\in\N$,
so the above result is a small improvement of \cite{Murphy}.
In particular, it includes the case of $p=p_0(3)=1$ for $d=3$. 
For example, we can apply Theorem \ref{thm:main} to
the following system of nonlinear Schr\"odinger equations with quadratic interaction
\begin{align}
\begin{cases}
i\partial_{t}u+\frac{1}{2m_{1}}\Delta{u}=\lambda\bar{u}v,\\
i\partial_{t}v+\frac{1}{2m_{2}}\Delta{v}=\mu u^{2},
\end{cases}
\label{2-system}
\end{align}
where $(u,v):\R\times\R^{d}_{x}\to\C^{2}$ are the unknown functions, $m_{1},m_{2}\in\R\setminus\{0\}$ and $\lambda,\mu\in\C$ are some constants satisfying $\lambda\mu>0$. 
Assumption \ref{ass:1} holds with $\Lambda=\diag(|\mu|,|\lambda|)$. 
The deterministic scattering problems have been studied for this system in weighted spaces, see \cite{H-L-O,H-O-T}. 
\end{rem}

\begin{rem}
Those $(q,r)\in[2,\I]^2$ satisfying \eqref{adm qr} are called admissible pairs for the Strichartz estimate of the Schr\"odinger equation:  
\EQ{
 \|U(t)\fy\|_{L^q_tL^r_x(\R^{1+d})} \lesssim \|\fy\|_{L^2(\R^d)}.}
The range \eqref{rang e} of $\e$ is non-empty for positive $p$ iff $p>p_1(d)$. 
There is $(q,r)\in[2,\I]^2$ satisfying both \eqref{adm qr} and \eqref{dual qr} as long as $p\ge 0$ and $(d-2)p\le 4$, namely in the $H^1$ subcritical and critical cases. 

If $p>\frac{4}{d+2}$, then $(q,r,\e)$ can be chosen such that $q \e<1$ and $r<\I$, in which case we have $\|t^\e U(t)\m{u}^\omega_+\|_{L^q_tL^r_x((0,\I)\times\R^d)}<\I$ almost surely, so that the space-time condition for uniqueness may be simplified to $\|t^\e \m{u}\|_{L^q_tL^r_x((0,\I)\times\R^d)}<\I$. 
Note that $p=\frac{4}{d+2}$ is the critical exponent by scaling for the deterministic scattering in $\F H^1$.  
We have $p_1(d)>\frac{4}{d+2}$ for $d\le 5$, $p_1(d)=\frac{4}{d+2}$ for $d=6$, and $p_1(d)<\frac{4}{d+2}$ for $d\ge 7$, whereas $p_0(d)>\frac{4}{d+2}$ for all $d\in\N$. 
\end{rem}

A similar argument applies to the scattering problem around plane wave solutions to the nonlinear Schr\"odinger equation with the defocusing cubic power in three dimensions:
\EQ{
 i\p_t\phi + \Delta \phi = |\phi|^2\phi, \pq \phi(t,x):\R^{1+3}\to\C.}
By the gauge, the scaling and the Galilean invariance, the problem for any plane wave $|\phi|=const.>0$ is reduced to the simplest case $\phi=e^{-it}$. 
Then the change of variable $\psi=e^{-it}\phi$ transforms the problem to the scattering around the non-zero equilibrium $\psi=1$ to
\begin{align}
&
i\partial_{t}\psi+\Delta{\psi}=(|\psi|^{2}-1)\psi, \pq\psi(t,x):\R^{1+3}\to\C,
\tag{GP}\label{eq:GP}
\end{align}
which is sometimes called the Gross-Pitaevskii equation, in distinction to the standard NLS setting. 
A natural space for solutions is the nonlinear metric space of functions with finite energy:
\EQ{
 E(\psi):=\int_{\R^3} \frac{|\na\psi|^2}{2}+\frac{(|\psi|^2-1)^2}{4}dx<\I.}
The global well-posedness of \eqref{eq:GP} in the energy space was proven by G\'erard \cite{Gerard}. 
Gustafson, Tsai and the first author \cite{G-N-T-09} proved existence of a solution to any asymptotic profile in the energy space. 
It is worth noting that the asymptotic behavior of $\psi-1$ contains quadratic correction terms from the linearized evolution. 

Since the proof in \cite{G-N-T-09} is by a compactness argument similar to \cite{Nakanishi}, the uniqueness is an open question. 
Randomizing the final-data in the energy space, however, we can prove almost sure unique existence of a solution with the prescribed asymptotic behavior and some space-time integrability. 
Actually, we obtain the asymptotic convergence in a stronger topology than in the deterministic case \cite{G-N-T-09}, which is due to a low-frequency improvement by the randomization. 
See Section \ref{sec:GP} (and Theorem \ref{thm:main2}) for the detail.

We conclude this section with some notation used throughout the paper. 
$A\lesssim B$ denotes $A\le CB$ for some constants $C>0$,
and $A\sim B$ means that we have $A\lesssim B$ and $B\lesssim A$.
For $p\in[1,\I]$,
$p'=p/(p-1)$ denotes the H\"older conjugate exponent.
For $T>0$, $I_{T}$ denotes the interval $(T,\infty)$.
$\ell^p$ denotes the $L^p$ space with the counting measure. 
The mixed $L^p$ norms are denoted by 
\EQ{
 \|u(t,x)\|_{L^p_t L^q_x(A\times B)}=\left\{\int_A\left(\int_B |u(t,x)|^qdx\right)^{p/q}dt\right\}^{1/p}}
for $p,q<\I$, with obvious modifications in the case of $p=\I$ or $q=\I$. 
The case with more than two variables is treated in the same way. 
For vector-valued function $\m{u}=(u_{1},\dots,u_{N})$, we use the same notation of norms as for scalar functions, meaning 
$\|\m{u}\|_X
:=\{\sum\nolimits_{n=1}^{N}\|u_{n}\|_X^{2}\}^{1/2}$. 
The norms for intersection and sum are defined as usual
\EQ{
 \|u\|_{A\cap B}:=\max(\|u\|_A,\|u\|_B), \pq \|u\|_{A+B}:=\inf_{u=v+w}(\|v\|_A+\|w\|_B).}
Finally, for $\e\in\R$, $T>0$ and $p,q\in[1,\I]$, a time-weighted norm is denoted by
\EQ{
 \|u\|_{X^{p,q}_\e(I_T)} = \|t^\e u(t,x)\|_{L^p_t L^q_x(I_T\times\R^d)}.}

\bigskip

\noindent
{\bf Acknowledgements.}
This work was supported by JSPS KAKENHI Grant Number JP17H02854.


\section{Linear estimates} \label{sec:propositions}
In this section, we prepare basic linear estimates, some are probabilistic and others are deterministic. 

\begin{lem}[Large Deviation Estimate]
\label{cor:LDE}
Let $d,N\in\N$ and $\{G_{k}\}_{k\in\Z^d}$ be as in Definition \ref{def:random-L2}.
Then there is a constant $C>0$ such that for any $\alpha\in[2,\infty)$ and any $\{\m{c}_{k}\}_{k\in\Z^{d}}\in(\ell^{2}_{k}(\Z^{d};\C))^{N}$, it holds that 
\begin{align}
\nor{\sum\nolimits_{k\in\Z^{d}}G_{k}\m{c}_{k}}_{L^{\alpha}_{\omega}(\Omega)}
\le C\ro{\alpha}\nor{\m{c}_{k}}_{\ell^{2}_{k}(\Z^{d})}.
\end{align}
\end{lem}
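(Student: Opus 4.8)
The plan is to follow the classical large-deviation argument of Burq and Tzvetkov \cite{B-T}, reducing the matrix-vector statement to the sub-Gaussianity of a weighted sum of scalar independent random variables. By the norm convention for vector-valued objects recalled in the introduction, the left-hand side equals $\bigl(\sum_{n=1}^{N}\|X_{n}\|_{L^{\alpha}_{\omega}}^{2}\bigr)^{1/2}$, where $X_{n}:=\sum_{k\in\Z^{d}}\sum_{b=1}^{N}c_{k,b}\,g_{n,b}^{k}$ is the $n$-th component of $\sum_{k}G_{k}\m{c}_{k}$ and $c_{k,b}$ denotes the $b$-th component of $\m{c}_{k}$. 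Since the sum over $b$ is finite with $N$ fixed, Minkowski's inequality in $L^{\alpha}_{\omega}$ followed by the Cauchy--Schwarz inequality in $b$ reduces everything to proving, for each fixed pair $(n,b)$ and each $\{c_{k}\}_{k}\in\ell^{2}_{k}(\Z^{d};\C)$, the scalar estimate
\begin{equation}
\Bigl\|\sum\nolimits_{k}c_{k}\,g_{n,b}^{k}\Bigr\|_{L^{\alpha}_{\omega}}\le C\sqrt{\alpha}\,\|c_{k}\|_{\ell^{2}_{k}}.\tag{$\ast$}
\end{equation}
Only the independence of $\{g_{n,b}^{k}\}_{k}$ in the lattice variable $k$, which is assumed, will be used; the sums over $n$ and $b$ are treated purely by the triangle inequality, so no joint independence across $(n,b)$ is needed.

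For $(\ast)$ I would first upgrade the exponential moment bound \eqref{ineq:g-condition} to polynomial ones. Applying \eqref{ineq:g-condition} to $\pm\gamma$ gives $\int_{\R}e^{\gamma|x|}\,d\mu_{n,b}^{k}(x)\le 2e^{C\gamma^{2}}$, hence by Markov's inequality and optimization in $\gamma$ the Gaussian tail $\mathbb{P}(|g_{n,b}^{k}|>\lambda)\le 2e^{-\lambda^{2}/(4C)}$; integrating $\mathbb{E}|g|^{\alpha}=\alpha\int_{0}^{\infty}\lambda^{\alpha-1}\mathbb{P}(|g|>\lambda)\,d\lambda$ and using $\Gamma(\tfrac{\alpha}{2}+1)\le(C\alpha)^{\alpha/2}$ yields $\|g_{n,b}^{k}\|_{L^{\alpha}_{\omega}}\le C\sqrt{\alpha}$ uniformly in $k$ and $\alpha\ge1$. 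The key point is that the weighted sum inherits a bound of the same type: for real coefficients $\{c_{k}\}$, independence in $k$ gives
\[
\mathbb{E}\,e^{\gamma\sum_{k}c_{k}g_{n,b}^{k}}=\prod_{k}\mathbb{E}\,e^{\gamma c_{k}g_{n,b}^{k}}\le\prod_{k}e^{C\gamma^{2}c_{k}^{2}}=e^{C\gamma^{2}\|c_{k}\|_{\ell^{2}_{k}}^{2}},
\]
so $S:=\sum_{k}c_{k}g_{n,b}^{k}$ satisfies an estimate of the form \eqref{ineq:g-condition} with $C$ replaced by $C\|c_{k}\|_{\ell^{2}_{k}}^{2}$; rerunning the tail-and-integration argument gives $\|S\|_{L^{\alpha}_{\omega}}\le C\sqrt{\alpha}\,\|c_{k}\|_{\ell^{2}_{k}}$. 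For complex $c_{k}$ one splits into real and imaginary parts and applies the triangle inequality in $L^{\alpha}_{\omega}$, losing only a harmless factor $2$. This establishes $(\ast)$.

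Assembling the pieces, $(\ast)$ and Cauchy--Schwarz in $b$ give $\|X_{n}\|_{L^{\alpha}_{\omega}}\le\sum_{b=1}^{N}\bigl\|\sum_{k}c_{k,b}g_{n,b}^{k}\bigr\|_{L^{\alpha}_{\omega}}\le C\sqrt{N}\sqrt{\alpha}\,\|\m{c}_{k}\|_{\ell^{2}_{k}}$ for each $n$, and summing the squares over $n$ yields the claim with a constant depending only on $N$ and on the constant in \eqref{ineq:g-condition}. I do not expect a genuine obstacle here, since this is essentially the Burq--Tzvetkov large-deviation lemma; the only delicate points are obtaining the sharp $\sqrt{\alpha}$ growth in the passage from exponential to polynomial moments, and keeping the use of Minkowski's inequality (over $b$ and in the real/imaginary decomposition) strictly separate from the use of independence (over $k$), since the hypotheses provide independence only in the lattice variable.
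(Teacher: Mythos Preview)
Your proposal is correct and follows exactly the route the paper indicates: the paper's proof consists solely of citing \cite[Lemma~3.1]{B-T} for the scalar case $N=1$ and declaring the vector extension ``obvious,'' and what you have written is precisely the standard Burq--Tzvetkov sub-Gaussian argument together with a clean reduction of the matrix-vector case to the scalar one via Minkowski over $b$ and $n$. Your care in noting that only independence in the lattice variable $k$ is used (not across the matrix indices) matches the hypotheses of Definition~\ref{def:random-L2} exactly.
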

\begin{proof}
This is \cite[Lemma 3.1]{B-T} in the scalar case $N=1$. The extension to vectors is obvious. 
\end{proof}

\begin{lem} \label{lem:a.s.linear0}
Let $d,N\in\N$ and $\{G_{k}\}_{k\in\Z^d}$ be as in Definition \ref{def:random-L2}, and 
let $U(t)$ be as in \eqref{def Ut} with some $M_1,\dots,M_N\in\R\setminus\{0\}$. 
Then there exists a constant $C>0$ such that for any $a\in(0,\I)$, $b\in(2,\I)$ and $\e\ge 0$ satisfying 
\EQ{ \label{cond eps}
 \epsilon_{0} :=-\epsilon+\bra{\frac{d}{2}-\frac{d}{b}}-\frac{1}{a}>0,}
and for any $\m{\fy}\in (L^2(\R^d))^{N}$, $\alpha\in[\max(a,b),\I)$ and $T>0$, we have 
\EQ{
 \|U(t)\m{\fy}^\omega\|_{L^\alpha_\omega(\Omega; X_\e^{a,b}(I_T))} \le C\sqrt\alpha \e_0^{-1/a}T^{-\e_0}\|\m{\fy}\|_{L^2(\R^d)},}
where $\m{\fy}^\omega$ is the randomization of $\m{\fy}$ defined by \eqref{def rand}. 
\end{lem}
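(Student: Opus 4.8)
The plan is to exploit the local smoothing/gain structure of the free propagator on each unit cube in physical space, combine it over cubes via $\ell^2$-orthogonality coming from the randomization, and then integrate the resulting pointwise-in-time decay against the time weight $t^\e$. The starting point is to write $\m{\fy}^\om = \sum_k \chi_k G_k \m{\fy}$ and apply $U(t)$. Since $\chi_k\m{\fy}$ is supported in the ball $B(k,2)$, the key deterministic ingredient is a dispersive-type bound for $U(t)$ acting on a function with compact (unit-scale) support: for $b>2$ one expects something like $\|U(t)(\chi_k\m{\fy})\|_{L^b_x}\lesssim \LR{t}^{-(d/2 - d/b)}\|\chi_k\m{\fy}\|_{L^{b'}_x}$, and then a further use of the unit-scale support to replace the $L^{b'}$ norm by $\|\chi_k\m{\fy}\|_{L^2_x}$ with a constant uniform in $k$ (since $M$ is diagonal with nonzero entries, each component evolves by a standard Schr\"odinger group up to rescaling time by $M_n$, so the constants are uniform). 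This gives, at least for $t\ge 1$, the decay exponent $d/2-d/b$ per cube.

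Next I would take the $L^\alpha_\om$ norm first. By Minkowski's inequality (valid since $\alpha\ge \max(a,b)\ge a$ and we can pull the $\om$-norm inside the $L^a_t$ and $\ell^2_k$-type norms) and the Large Deviation Estimate (Lemma \ref{cor:LDE}), one controls $\|\sum_k \chi_k(x) G_k U(t)\m{\fy}\,\text{-type sums}\|_{L^\alpha_\om}$ by $\sqrt\alpha$ times an $\ell^2_k$ sum of the deterministic single-cube quantities. The bump functions $\chi_k$ have bounded overlap, so the $\ell^2_k$ sum of $\|\chi_k\m{\fy}\|_{L^2_x}^2$ is comparable to $\|\m{\fy}\|_{L^2}^2$; combined with the per-cube decay this yields a bound of the form $\|U(t)\m{\fy}^\om\|_{L^\alpha_\om L^b_x} \lesssim \sqrt\alpha\,\LR{t}^{-(d/2-d/b)}\|\m{\fy}\|_{L^2}$ for each fixed $t$. (One must be slightly careful to order the norms so that $L^b_x$ and $\ell^2_k$ interact correctly — here $b\ge 2$ lets Minkowski move the $\ell^2_k$ inside $L^b_x$.)

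The final step is the time integration: $\|t^\e U(t)\m{\fy}^\om\|_{L^a_t(I_T)}$ is estimated by $\sqrt\alpha\,\|\m{\fy}\|_{L^2}\,\big(\int_T^\infty t^{\e a}\,t^{-(d/2-d/b)a}\,dt\big)^{1/a}$. The exponent in the integrand is $-(d/2-d/b-\e)a = -(\e_0+1/a)a = -\e_0 a - 1$, so the integral converges precisely when $\e_0>0$ (this is exactly hypothesis \eqref{cond eps}), and it evaluates to $(\e_0 a)^{-1/a}T^{-\e_0}\lesssim \e_0^{-1/a}T^{-\e_0}$, giving the claimed bound with constant $C\sqrt\alpha\,\e_0^{-1/a}$. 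Since $T>0$ and we only need the estimate on $I_T$, replacing $\LR{t}$ by $t$ for $t\ge T$ in the decay is harmless after adjusting constants; if one wants it uniformly for small $T$ the $\LR{t}^{-(d/2-d/b)}$ bound on $(0,1)$ still integrates against $t^{\e a}$ since $\e\ge 0$.

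The main obstacle I expect is establishing the clean single-cube dispersive estimate with a constant uniform in $k$ and in $t$ down to $t=0$, i.e. the bound $\|U(t)(\chi_k\m{\fy})\|_{L^b_x}\lesssim \LR{t}^{-(d/2-d/b)}\|\chi_k\m{\fy}\|_{L^2_x}$; this is really a $TT^*$/stationary-phase fact about the Schr\"odinger group localized in space, and one has to check the endpoint-type behavior at small $t$ and the interplay with the diagonal matrix $M$ (rescaling each coordinate's time), as well as the harmless but needed uniformity in $k$ from translation invariance. Everything else — Lemma \ref{cor:LDE}, Minkowski, bounded overlap of $\{\chi_k\}$, and the elementary time integral — is routine.
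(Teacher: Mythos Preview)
Your argument is correct and in fact cleaner than the paper's. The paper first passes $L^\alpha_\omega$ inside $X^{a_1,b}_{\e_1}$, applies the large deviation estimate, and then pulls the $\ell^2_k$ sum all the way outside the mixed norm $L^{a_1}_t L^b_x$ \emph{before} invoking the dispersive estimate; this last Minkowski swap forces $a_1\ge 2$, so the paper must treat the range $a<2$ separately via a H\"older embedding $X^{2,b}_{\e_1}(I_T)\hookrightarrow X^{a,b}_\e(I_T)$ with a suitably adjusted weight exponent $\e_1$. Your route avoids this case split: by establishing the pointwise-in-$t$ bound $\|U(t)\m{\fy}^\omega\|_{L^\alpha_\omega L^b_x}\lesssim \sqrt\alpha\, t^{-(d/2-d/b)}\|\m{\fy}\|_{L^2}$ first (which only needs $\alpha\ge b$ and $b\ge 2$), the $\ell^2_k$ sum collapses to the scalar $\|\m{\fy}\|_{L^2}$ before any $t$-integration, so no swap with $L^a_t$ is needed and every $a\in(0,\infty)$ works directly. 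The only correction is to drop the Japanese bracket: the bound $\|U(t)(\chi_k\m{\fy})\|_{L^b_x}\lesssim \LR{t}^{-(d/2-d/b)}\|\chi_k\m{\fy}\|_{L^2_x}$ is actually false near $t=0$ (at $t=0$ it would assert a local embedding $L^2\hookrightarrow L^b$ for $b>2$), but this is irrelevant since on $I_T$ the standard dispersive decay $|t|^{-(d/2-d/b)}$ plus H\"older on $\supp\chi_k$ gives exactly what you need, and the time integral $\int_T^\infty t^{-\e_0 a-1}\,dt=(\e_0 a)^{-1}T^{-\e_0 a}$ already yields the stated constant for every $T>0$. The ``main obstacle'' you flag is therefore a phantom.
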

\begin{proof}
First note that by explicit integration we have for any $p\in(0,\I)$, $\e>0$, and $T>0$, 
\EQ{ \label{t power norm}
 \|t^{-\e}\|_{L^p_t(I_T)} = (\e p-1)^{-1/p} T^{-\e+1/p} \sim (\e-1/p)^{-1/p} T^{-(\e-1/p)}.}

If $a<2$, then let $a_1:=2$ and $\e_1:=\e+1/a-1/2+\e_0/2$. We have an embedding $X^{a_1,b}_{\e_1}(I_T)\subset X^{a,b}_\e(I_T)$ by H\"older: putting $1/a_2:=1/a-1/2$,
\EQ{ \label{emb X}
 \|\m{\fy}\|_{X^{a,b}_\e(I_T)} \le \|t^{-(\e_1-\e)}\|_{L^{a_2}_t(I_T)}\|\m{\fy}\|_{X^{a_1,b}_{\e_1}(I_T)} \lec \e_0^{1/2-1/a}T^{-\e_0/2}\|\m{\fy}\|_{X^{a_1,b}_{\e_1}(I_T)},}
since $\e_1-\e=1/a_2+\e_0/2>1/a_2$. 
If $a\ge 2$, then let $a_1:=a$ and $\e_1:=\e$. 

In both cases, using Minkowski's inequality,
Lemma \ref{cor:LDE}, 
and the dispersive decay for the Schr\"odinger equation, we obtain
\EQ{
&\label{ineq:a.s.linear0}
\|U(t)\m{\varphi}^{\omega}\|
	_{L^{\alpha}_{\omega}(\Omega;X^{a_1,b}_{\e_1}(I_{T}))}
\\&\le\nor{\|U(t)\m{\varphi}^{\omega}\|
	_{L^{\alpha}_{\omega}(\Omega)}}_{X^{a_1,b}_{\e_1}(I_{T})}
=\nor{\nor{\textstyle\sum_{k\in\Z^{d}}G_{k}[U(t)\psi_{k}\m{\varphi}]}
	_{L^{\alpha}_{\omega}(\Omega)}}_{X^{a_1,b}_{\e_1}(I_{T})}
\\&
\lesssim\sqrt{\alpha}\nor{\nor{U(t)\psi_{k}\m{\varphi}}
	_{\ell^{2}_{k}(\Z^{d})}}_{X^{a_1,b}_{\e_1}(I_{T})}
\le\sqrt{\alpha}\nor{\nor{U(t)\psi_{k}\m{\varphi}}
	_{X^{a_1,b}_{\e_1}(I_{T})}}_{\ell^{2}_{k}(\Z^{d})}
\\&
\lesssim\sqrt{\alpha}\nor{t^{\e_1-d/2+d/b}
	\nor{\psi_{k}\m{\varphi}}_{L^{b'}_{x}(\R^{d})}}_{\ell^{2}_{k}L^{a_1}_t(\Z^{d}\times I_T)}
\\& \lesssim\sqrt{\alpha}\nor{t^{\e_1-d/2+d/b}}_{L^{a_1}_{t}(I_{T})}
	\|\m{\varphi}\|_{L^{2}_{x}(\R^{d})},
}
where we used that $|\supp\psi_k|=|\supp\psi_0|\lesssim 1$ and $\sum_{k\in\Z^d}|\psi_k(x)|^2\sim 1$ in the last step. If $a\ge 2$, then $\e_1-d/2+d/b=-1/a-\e_0$, so 
\EQ{
 \|t^{\e_1-d/2+d/b}\|_{L^{a_1}_t(I_T)} \sim \e_0^{-1/a} T^{-\e_0},}
implies the desired estimate. If $a<2$, then $\e_1-d/2+d/b=-1/a-\e_0-\e+\e_1=-1/a_1-\e_0/2$, so
\EQ{
 \|t^{\e_1-d/2+d/b}\|_{L^{a_1}_t(I_T)} \sim \e_0^{-1/a_1} T^{-\e_0/2},}
together with \eqref{emb X} implies the desired estimate. 
\end{proof}

\begin{lem}
\label{prop:t-Strichartz}
Let $d,N\in\N$ and 
let $U(t)$ be as in \eqref{def Ut} with some $M_1,\dots,M_N\in\R\setminus\{0\}$. Let $(q_{0},r_{0})$, $(q_{1},r_{1})\in[2,\I]^2$ be admissible pairs. 
Then there exists a constant $C>0$ such that for any $\e\ge 0$, $T>0$ 
and any $\m{F}\in (X^{q_{1}',r_{1}'}_{\epsilon}(I_T))^{N}$, we have
\begin{align}
&\label{ineq:t-Strichartz}
\nor{\int_{\I}^{t}U(t-s)\m{F}(s)\,ds}_{X^{q_{0},r_{0}}_{\epsilon}(I_T)}
\le C \|\m{F}\|_{X^{q_{1}',r_{1}'}_{\epsilon}(I_T)}.
\end{align}
\end{lem}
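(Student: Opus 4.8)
The plan is to reduce the weighted inhomogeneous estimate to the standard (unweighted) inhomogeneous Strichartz estimate by a dyadic-in-time decomposition. Since $U(t)$ is the free Schr\"odinger propagator for a diagonal mass matrix with nonzero entries, each scalar component satisfies the usual Strichartz estimates with the same admissible exponents, so it suffices to treat one scalar equation; the vector norm is just the $\ell^2$ combination over components. Write $I_T=(T,\I)$ and decompose it into dyadic annuli $I_j:=(2^jT,2^{j+1}T]$ for $j\ge 0$. On $I_j$ the weight $t^\e$ is comparable to $(2^jT)^\e$, so one expects
\[
 \Big\|t^\e\int_\I^t U(t-s)F(s)\,ds\Big\|_{L^{q_0}_t L^{r_0}_x(I_j)} \lesssim (2^jT)^\e\Big\|\int_\I^t U(t-s)F(s)\,ds\Big\|_{L^{q_0}_t L^{r_0}_x(I_j)}.
\]

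The first key step is to handle the Duhamel integral: split $\int_\I^t = \int_\I^{2^jT} + \int_{2^jT}^t$. For $t\in I_j$, the second piece is a Duhamel integral over a subinterval of $I_j$ itself, which by the standard inhomogeneous Strichartz estimate is controlled by $\|F\|_{L^{q_1'}_tL^{r_1'}_x(I_j)}$, and the weight $(2^jT)^\e$ there is comparable to $t^\e$ on the same annulus, giving $\|t^\e F\|_{L^{q_1'}_tL^{r_1'}_x(I_j)}$, which sums (in $\ell^2_j$, hence also in $\ell^1$ after using disjointness) to $\|F\|_{X^{q_1',r_1'}_\e(I_T)}$. For the first (tail) piece, $v_j(t):=\int_\I^{2^jT}U(t-s)F(s)\,ds = U(t-2^jT)w_j$ with $w_j:=\int_\I^{2^jT}U(2^jT-s)F(s)\,ds \in L^2_x$; by the homogeneous Strichartz estimate its $L^{q_0}_tL^{r_0}_x(I_j)$ norm is bounded by $\|w_j\|_{L^2_x}$, and by the dual inhomogeneous Strichartz estimate (or the dispersive estimate plus $TT^*$) $\|w_j\|_{L^2_x}\lesssim \|F\|_{L^{q_1'}_tL^{r_1'}_x((2^{j}T,\I))}=\|F\|_{L^{q_1'}_tL^{r_1'}_x(\bigcup_{\ell\ge j}I_\ell)}$.

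The remaining step — and the one requiring a little care — is to sum the tail contributions. We get, schematically,
\[
 \sum_{j\ge 0}\Big[(2^jT)^\e \big\|F\big\|_{L^{q_1'}_tL^{r_1'}_x(\bigcup_{\ell\ge j}I_\ell)}\Big]^{q_0}\quad\text{(with the obvious modification if }q_0=\I\text{),}
\]
and one must bound this by $\|t^\e F\|_{L^{q_1'}_tL^{r_1'}_x(I_T)}^{q_0}$ up to a constant. Here is where the structure matters: write $a_\ell:=\|F\|_{L^{q_1'}_tL^{r_1'}_x(I_\ell)}$ so that $\|F\|_{L^{q_1'}_tL^{r_1'}_x(\bigcup_{\ell\ge j}I_\ell)}=(\sum_{\ell\ge j}a_\ell^{q_1'})^{1/q_1'}$ and $\|t^\e F\|^{q_1'}_{L^{q_1'}_tL^{r_1'}_x(I_T)}\sim\sum_\ell (2^\ell T)^{\e q_1'}a_\ell^{q_1'}$. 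Thus one needs a weighted discrete Hardy-type inequality of the form $\big\|\,2^{j\e}(\sum_{\ell\ge j}a_\ell^{q_1'})^{1/q_1'}\,\big\|_{\ell^{q_0}_j}\lesssim \big\|2^{\ell\e}a_\ell\big\|_{\ell^{q_1'}_\ell}$ for $\e>0$; the geometric decay of the weight $2^{j\e}$ makes this elementary (bound $(\sum_{\ell\ge j}a_\ell^{q_1'})^{1/q_1'}\le\sum_{\ell\ge j}a_\ell\le\sum_{\ell\ge j}2^{(\ell-j)\e}\cdot 2^{-(\ell-j)\e}\cdot 2^{(\ell)\e}\cdots$ and use that $\e>0$ — a Schur test on the kernel $2^{-(\ell-j)\e}\mathbf 1_{\ell\ge j}$). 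The case $\e=0$ is just the standard inhomogeneous Strichartz estimate on $I_T$, so one may assume $\e>0$; and the endpoint restrictions $r_i>2$ for $d=2$ are exactly what is needed for all the invoked Strichartz estimates (including the non-diagonal pair $(q_0,r_0)\ne(q_1,r_1)$) to be valid. The main obstacle is purely bookkeeping — getting the dyadic sum to close with the correct norm on the right — and it is resolved by the positivity $\e\ge 0$ together with the geometric weights; no oscillation or resonance analysis enters.
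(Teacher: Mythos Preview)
Your dyadic-in-time approach is correct and gives a valid proof for each fixed $\e>0$, but it differs from the paper's argument and yields a slightly weaker statement. The paper does not decompose dyadically: instead it writes the exact identity $t^{\e q_0}=T^{\e q_0}+\int_T^t \e q_0\,\tau^{\e q_0-1}\,d\tau$, which (via the $\ell^{q_0}$ triangle inequality) bounds $\|t^\e u\|_{L^{q_0}_t(I_T)}$ by $T^\e\|u\|_{L^{q_0}_t(I_T)}+\big\|g(\tau)\|u\|_{L^{q_0}_t(I_\tau)}\big\|_{L^{q_0}_\tau(I_T)}$ with $g(\tau)=(\e q_0)^{1/q_0}\tau^{\e-1/q_0}$. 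One then applies the \emph{unweighted} Strichartz estimate on each $I_\tau$ and swaps the $\tau$- and $t$-integrals by Minkowski (using $q_0\ge q_1'$); the normalization of $g$ is exactly such that $\|g(\tau)I_\tau(t)\|_{L^{q_0}_\tau(I_T)}\le t^\e$, and the argument closes with constant $2$ times the unweighted Strichartz constant, \emph{uniformly in $\e\ge 0$}. Your dyadic argument, by contrast, produces a constant coming from the Young/Schur bound on the kernel $2^{-(\ell-j)\e}\mathbf 1_{\ell\ge j}$, and this constant is of order $\e^{-1}$ as $\e\to 0^+$. So you prove the estimate with $C=C(\e)$ rather than the $C$ independent of $\e$ asserted in the lemma. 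For the applications in the paper this is harmless, since $\e$ ranges over a fixed compact subinterval of $(0,\infty)$; but as a proof of the lemma \emph{as stated} it falls just short. The continuous layer-cake decomposition buys precisely this uniformity, at essentially no extra cost.
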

\begin{proof}
In the case without weight $\e=0$, this is the well-known Strichartz estimate for the Schr\"odinger equation, to which the case of $\e>0$ is reduced as follows.
We interpret $I_{T}$ as the indicator function of $I_{T}$.
For any $\e>0$ and $1\le q<\I$, let $g(t):=(\e q)^{1/q}t^{\e-1/q}$. Then for $t>T>0$, we have 
\EQ{
 t^{\e q} = T^{\e q} + \int_T^t \e q  \tau^{\e q-1}d\tau = T^{\e q} + \|g(\tau)I_\tau(t)\|_{L^q_\tau(I_T)}^q.}
Hence for any measurable $u:I_T\to\R$ we obtain, using Fubini and H\"older for $\ell^q(\{1,2\})$ as well, 
\EQ{
 \|t^\e u(t)\|_{L^q_t(I_T)} &\le \|T^\e u(t)\|_{L^q_t(I_T)} + \|g(\tau)I_\tau(t)u(t)\|_{L^q_tL^q_\tau(I_T\times I_T)}
 \\&= T^\e \|u\|_{L^q_{t}(I_T)} + \|g(\tau)\|u(t)\|_{L^q_t(I_\tau)}\|_{L^q_\tau(I_T)}
 \le 2\|t^\e u(t)\|_{L^q_t(I_T)}.}
If $\e>0$ and $q_0<\I$, then using the above estimate and the Strichartz without weight, we obtain 
\EQ{
 &\nor{\int_{\I}^{t}U(t-s)\m{F}(s)\,ds}_{X^{q_{0},r_{0}}_{\epsilon}(I_T)}
 \\& \lesssim T^\e \|\m{F}\|_{X^{q_1',r_1'}_0(I_T)} + 
    \nor{g(\tau)\|\m{F}\|_{X^{q_1',r_1'}_0(I_\tau)}}_{L^{q_0}_\tau(I_T)}
 \\&\le \|\m{F}\|_{X^{q_1',r_1'}_\e(I_T)} +
  \|g(\tau)\m{F}(t,x)I_\tau(t)\|_{L^{q_1'}_t L^{q_0}_\tau L^{r_1'}_x(I_T \times I_T\times \R^d)}
\\& \le 2 \|\m{F}\|_{X^{q_1',r_1'}_\e(I_T)},}
where we used $L^{q_1'}_tL^{q_0}_\tau\subset L^{q_0}_\tau L^{q_1'}_t$ by Minkowski and $q_0\ge q_1'$ in the second inequality. 
The case of $q_0=\I$ is immediate from the Strichartz estimate without weight into $L^2_x$. 
\end{proof}


\section{Proof of the main theorem}
\label{sec:proof}

The proof of Theorem \ref{thm:main} consists of three steps. 
First, for the free solution of the randomized final-data $\m{u}^0:=U(t)\m{u}_+^\omega$, we have dispersive decay estimates, which are almost the best possible, by Lemma \ref{lem:a.s.linear0}. 
Then using this decay property, we can construct a unique solution $\m{u}$ asymptotic to $\m{u}^0$ locally around time infinity in a time-weighted Strichartz space, under the Assumption \ref{ass:1} (i). 
Finally, the local solution is extended to a global one using Assumption \ref{ass:1} (ii). 

The unique local solution is given by the following lemma, which is a deterministic statement. 
\begin{lem}
\label{lem:contraction}
Let $d,N\in\N$, $p\in(p_{1}(d),\frac{4}{d})$, and $\m{f}:\C^N\to\C^N$ satisfy Assumption \ref{ass:1} (i). 
Let $\e>0$ satisfy \eqref{rang e}. 
Let $(q_0,r_0)\in(0,\I]^2$ and $(q_1,r_1)\in[2,\I]^2$ such that $(q,r)=(q_j,r_j)$ satisfies \eqref{adm qr} and \eqref{dual qr} for $j=0,1$. 
Then there exists $\eta_{0}>0$ such that the following holds. 
For any $\eta\in(0,\eta_{0})$, $\m{u}_{+}\in (L^{2}(\R^{d}))^{N}$ and $T\ge 1$ satisfying 
\EQ{ \label{dec free}
 \nor{U(t)\m{u}_{+}}_{X^{q_0,r_0}_{\epsilon}(I_{T})}\le \eta,}
there exists a unique local solution
$\m{u}\in C_{t}(I_T;(L^2_x(\R^d))^N)$ to \eqref{eq:NLS} on $I_{T}$ such that 
\EQ{ \label{diff est}
 \|\m{u}-U(t)\m{u}_{+}\|_{X^{\I,2}_\e(I_T)\cap X^{q_1,r_1}_{\epsilon}(I_{T})}\le \eta.}
\end{lem}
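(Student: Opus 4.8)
The plan is to prove Lemma~\ref{lem:contraction} by a standard contraction-mapping argument in the time-weighted Strichartz spaces, with the time weight $t^\e$ playing the role usually played by a gain in integrability near $t=\I$. Set $\m{u}^0:=U(t)\m{u}_+$ and look for $\m{u}=\m{u}^0+\m{v}$ where $\m{v}$ solves the Duhamel equation
\EQ{
 \m{v}=\Phi(\m{v}):=-i\int_\I^t U(t-s)\,\m{f}(\m{u}^0+\m{v})(s)\,ds.
}
We work on the complete metric space
\EQ{
 \mathcal{B}_\eta:=\{\m{v}\in X^{\I,2}_\e(I_T)\cap X^{q_1,r_1}_\e(I_T) : \|\m{v}\|_{X^{\I,2}_\e(I_T)\cap X^{q_1,r_1}_\e(I_T)}\le\eta\}
}
with the distance induced by the same norm (here one must be slightly careful: since the nonlinear estimate is only Lipschitz, not $C^1$, it is cleaner to close the contraction in the $X^{q_1,r_1}_\e$ part and recover the $X^{\I,2}_\e$ bound a posteriori, but with $\eta$ small both work). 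The first step is to record the two estimates I need: the inhomogeneous time-weighted Strichartz estimate from Lemma~\ref{lem:contraction}'s hypothesis, i.e. Lemma~\ref{prop:t-Strichartz} applied with the admissible pair $(q_0,r_0)$ or $(\I,2)$ on the left and the dual pair $(q_1',r_1')$ on the right (and, when $q_0<2$, one first passes through an admissible pair via the H\"older embedding \eqref{emb X} exactly as in the proof of Lemma~\ref{lem:a.s.linear0}); and the pointwise nonlinear bound from Assumption~\ref{ass:1}~(i), $|\m{f}(\m{w})|\lec|\m{w}|^{p+1}$ and $|\m{f}(\m{w}_1)-\m{f}(\m{w}_2)|\lec(|\m{w}_1|^p+|\m{w}_2|^p)|\m{w}_1-\m{w}_2|$.

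The heart of the matter is the single multilinear estimate
\EQ{
 \|\m{f}(\m{w}_1)-\m{f}(\m{w}_2)\|_{X^{q_1',r_1'}_\e(I_T)}
 \lec \big(\|\m{w}_1\|_Y+\|\m{w}_2\|_Y\big)^p \|\m{w}_1-\m{w}_2\|_Y,
}
where $Y:=X^{q_1,r_1}_\e(I_T)$ (or the intersection space). In the $x$-variable this is just H\"older: $r_1'=r_1/(p+1)$ by \eqref{dual qr} with equality forced by scaling, so $L^{r_1'}_x$ absorbs $p+1$ factors of $L^{r_1}_x$. In the $t$-variable one needs to match the time weights: the left side carries $t^\e$, each factor on the right carries $t^\e$, so one is left needing $\|t^{\e-(p+1)\e}\,h(t)\|_{L^{q_1'}_t}\lec\|h\|_{L^{q_1/p\cdots}_t}$-type bookkeeping, which closes precisely when $\e$ lies in the range \eqref{rang e}: the lower bound $\e>\tfrac1p-\tfrac d4$ is what makes the residual power of $t$ integrable at $t=\I$ after applying H\"older in time among the $p+1$ Strichartz factors, and the upper bound $\e<\tfrac{dp}{4(p+1)}$ is what keeps the relevant time-Lebesgue exponents in the admissible range $[2,\I]$ (equivalently, keeps $\e(p+1)$ from overshooting the scaling budget $d/4$ carried by an admissible pair). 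This is the step I expect to be the main obstacle: carefully choosing the intermediate admissible pairs and verifying that the H\"older exponents in $t$ all land in $[2,\I]$, which is exactly the computation that produces the Strauss-type threshold $p_1(d)$.

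Once that nonlinear estimate is in hand, the contraction closes in the usual way: for $\m{v}\in\mathcal{B}_\eta$,
\EQ{
 \|\Phi(\m{v})\|_Y \lec \|\m{f}(\m{u}^0+\m{v})\|_{X^{q_1',r_1'}_\e} \lec \big(\|\m{u}^0\|_{X^{q_0,r_0}_\e}+\|\m{v}\|_Y\big)^{p+1} \lec (\eta+\eta)^{p+1} \le \eta
}
provided $\eta_0$ is chosen so small that $C(2\eta_0)^{p+1}\le\eta_0$, and likewise $\|\Phi(\m{v}_1)-\Phi(\m{v}_2)\|_Y\lec(\eta)^p\|\m{v}_1-\m{v}_2\|_Y\le\tfrac12\|\m{v}_1-\m{v}_2\|_Y$; note here one uses \eqref{dec free} to bound $\|\m{u}^0\|_{X^{q_0,r_0}_\e}\le\eta$, and that by the admissible-pair Strichartz estimate $\|\m{u}^0\|_{X^{\I,2}_\e}$ and $\|\m{u}^0\|_{X^{q_1,r_1}_\e}$ are also controlled (the latter by $\lec\eta$ after possibly shrinking, or simply by interpolation/Strichartz from $\|\m{u}_+\|_{L^2}$, which is finite). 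The fixed point $\m{v}$ then satisfies \eqref{diff est}, and $\m{u}=\m{u}^0+\m{v}\in C_t(I_T;(L^2_x)^N)$ because each Duhamel iterate is, by the $(\I,2)$ endpoint Strichartz estimate, a continuous $L^2_x$-valued function of $t$ and the convergence is uniform. Uniqueness of $\m{u}$ in the class \eqref{diff est} is immediate from the contraction property of $\Phi$ on $\mathcal{B}_\eta$ (any two solutions lie in $\mathcal{B}_\eta$ after possibly enlarging by a harmless constant, and then agree); a short separate argument, splitting $I_T$ into finitely many subintervals on which the relevant Strichartz norm is small, upgrades this to uniqueness among all solutions satisfying \eqref{diff est} with the same $(q_1,r_1,\e)$. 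Finally, $\eta_0$ depends only on $d,N,p,\e$ and the admissible pairs, as required.
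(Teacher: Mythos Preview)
There is a genuine gap in how you handle the free part $\m{u}^0$. The hypothesis \eqref{dec free} controls $\m{u}^0$ only in $X^{q_0,r_0}_\e(I_T)$, and your parenthetical claim that $\|\m{u}^0\|_{X^{q_1,r_1}_\e(I_T)}$ is ``also controlled \dots\ by interpolation/Strichartz from $\|\m{u}_+\|_{L^2}$'' is not correct: the homogeneous Strichartz estimate gives $\|\m{u}^0\|_{L^{q_1}_tL^{r_1}_x}\lec\|\m{u}_+\|_{L^2}$ \emph{without} the weight $t^\e$, and for $\e>0$ the weighted norm $\|t^\e\m{u}^0\|_{L^{q_1}_tL^{r_1}_x(I_T)}$ need not even be finite for a general $\m{u}_+\in L^2$. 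Nor can you assume it is $\le\eta$ ``after possibly shrinking'', since the lemma's hypothesis fixes only the $(q_0,r_0)$ norm (and $(q_0,r_0)$ is allowed to be non-admissible, with $q_0<2$). The paper resolves this by placing the nonlinearity in a \emph{sum} of two spaces $\ti X^0_T+\ti X^1_T$, where $\ti X^j_T:=X^{q_j/(p+1),\,r_j/(p+1)}_{(p+1)\e}(I_T)$: the pure term $|\m{u}^0|^{p+1}$ lands in $\ti X^0_T$ by H\"older using only the $X^{q_0,r_0}_\e$ bound, and the pure term $|\m{v}|^{p+1}$ in $\ti X^1_T$; for the difference estimate the mixed term $|\m{u}^0|^p|\m{v}^0-\m{v}^1|$ is handled via the complex interpolation $[\ti X^0_T,\ti X^1_T]_{1/(p+1)}$. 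This two-space decomposition is the idea missing from your outline.

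A secondary point: the asserted identity $r_1'=r_1/(p+1)$ is false in general (it would force $r_1=p+2$). Condition \eqref{dual qr} only says that $\ti r_1:=r_1/(p+1)$ lies in $[\max(1,2d/(d+2)),2]$, i.e.\ that $\ti r_1$ is the dual of an admissible space exponent; the correct target for the inhomogeneous Strichartz estimate is therefore $X^{\hat q_1',\,\ti r_1}_\e$ with $(\hat q_1,\ti r_1')$ admissible, not $X^{q_1',r_1'}_\e$. Once this is corrected, your heuristic for the H\"older step in $t$ becomes exactly the paper's computation \eqref{nonlin decay}: $1/\hat q_j'-(p+1)/q_j=1-dp/4<p\e$, which is where the lower bound in \eqref{rang e} enters. (The upper bound in \eqref{rang e} is not used in this lemma; it is what makes $\e_0>0$ in Lemma~\ref{lem:a.s.linear0} for the specific choice $r_0=2(p+1)$ in the proof of the main theorem.)
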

\begin{rem}
The above solution $\m{u}$ scatters in $L^2(\R^d)$ with the final-data $\m{u}_+$ because of the $X^{\I,2}_\e(I_T)$ estimate in \eqref{diff est}. 
The pair $(q_1,r_1)$ is admissible, but $(q_0,r_0)$ is not necessarily so, since $q_0$ can be in $(0,2)$, or $r_0$ can be in $(\frac{2d}{d-2},\I]$, for $d\ge 3$. This extension of the range of $(q_0,r_0)$ is needed to satisfy \eqref{dec free} when $p\in(p_1(d),\frac{4}{d+2})$, which is not empty for $d\ge 7$. 
The small constant $\eta_0$ can be taken uniformly for $[q_1,r_1]$ except for the limit $(q_1,r_1)\to(2,\I)$ in $d=2$, where the Strichartz estimate blows up. 
\end{rem}
\begin{proof}
Let $\m u^0:=U(t)\m{u}_{+}$ and $\m u^1:=\m u-\m u^0$. 
Then \eqref{eq:NLS} for $\m u$ with the final-data $\m{u}_+$ is rewritten for $\m u^1$ as
\EQ{
 \m u^1 = i\int_t^\I U(t-s)\m{f}(\m u^0 + \m u^1)\,ds.}
Hence it suffices to show that the mapping $\Phi$ defined by
\EQ{
 \Phi(\m v):= i\int_t^\I U(t-s)\m{f}(\m u^0 + \m v)\,ds}
is a contraction on the closed $\eta$-ball of the Banach space 
\EQ{
 X_T=\bra{C_{t}(I_{T};L^2(\R^{d}))\cap X^{\I,2}_\e(I_T)\cap X^{q_1,r_1}_\e(I_{T})}^N,}
if $\eta>0$ is small enough. 

For $j=0,1$, let $\ti q_j:=q_j/(p+1)$, $\ti r_j:=r_j/(p+1)$, and 
\EQ{ 
 X^j_T:=X^{q_j,r_j}_\e(I_T), \pq \ti X^j_T:=X_{(p+1)\e}^{\ti q_j,\ti r_j}(I_T).} 
Then for any $\m v\in X_T$ with $\|\m v\|_{X_T}\le\eta$, we have by Assumption \ref{ass:1} (i) and H\"older in $(t,x)$,  
\EQ{
 \|\m f(\m u^0+\m v)\|_{\ti X^0_T+\ti X^1_T}
& \lec \||\m u^0|^{p+1}\|_{\ti X^0_T} + \||\m v|^{p+1}\|_{\ti X^1_T}
\\& \lec \|\m u^0\|_{X^0_T}^{p+1} + \|\m v\|_{X^1_T}^{p+1} \lec \eta^{p+1}.}
Since \eqref{dual qr} holds for $r=r_j$, there is a unique $\hat q_j\in[2,\I]$ such that $(\hat q_j,\ti r_j')$ is admissible, and then 
\EQ{ \label{nonlin decay}
 \frac{1}{\hat q_j'}+\frac{d}{2\ti r_j}=\frac d4+1,\pq \frac{2}{\ti q_j}+\frac{d}{\ti r_j}=\frac d4(p+1)
 \implies \frac{1}{\hat q_j'}-\frac{1}{\ti q_j}=1-\frac{dp}{4}< p\e,}
where the last inequality follows from \eqref{rang e}. 
Hence by H\"older in $t$, 
\EQ{
 \|\m f(\m u^0+\m v)\|_{\hat X^0_T+\hat X^1_T} \lec \|t^{p\e}\|_{L^a_t(I_T)}\|\m f(\m u^0+\m v)\|_{\ti X^0_T+\ti X^1_T} \lec T^{1/a-p\e} \eta^{p+1},}
where $1/a:=1-dp/4$ and $\hat X^j_T:=X^{\hat q_j',\ti r_j}_\e(I_T)$. 
Since $T\ge 1$, the right hand side can be made much smaller than $\eta$ by choosing $\eta_0>0$ small enough. 
On the other hand, the weighted Strichartz Lemma \ref{prop:t-Strichartz} implies 
\EQ{
 \|\Phi(\m v)\|_{X_T} \lec \|\m f(\m u^0+\m v)\|_{\hat X^0_T+\hat X^1_T}.}
Therefore, it holds that $\|\Phi(\m v)\|_{X_T}\le \eta$, if $\eta_{0}>0$ is sufficiently small.

Similarly, if $\m v^0,\m v^1\in X_T$ satisfy $\|\m v^j\|_{X_T}\le\eta$, then 
\EQ{
 \|\Phi(\m v^0)-\Phi(\m v^1)\|_{X_T}
 &\lec \|\m f(\m u^0+\m v^0)-\m f(\m u^0-\m v^1)\|_{\hat X^0_T+\hat X^1_T}
 \\&\lec T^{1/a-p\e} \|(|\m u^0|+|\m v^0|+|\m v^1|)^p|\m v^0-\m v^1|\|_{\ti X^0_T+\ti X^1_T}
 \\&\lec \bra{\|\m u^0\|_{X^0_T}^p+\|\m v^0\|_{X^1_T}^p+\|\m v^1\|_{X^1_T}^p}\|\m v^0-\m v^1\|_{X^1_T}
 \\&\lec \eta^p\|\m v^0-\m v^1\|_{X^1_T},}
where in the third inequality we used the interpolation 
\EQ{
 \||\m u^0|^p|\m v^0-\m v^1|\|_{\ti X^0_T+\ti X^1_T}
& \lec \||\m u^0|^p|\m v^0-\m v^1|\|_{[\ti X^0_T,\ti X^1_T]_{1/(p+1)}}
\\& \lec \|\m u^0\|_{X^0_T}^p\|\m v^0-\m v^1\|_{X^1_T},}
where $[\ti X^0_T,\ti X^1_T]_{1/(p+1)}=X^{q_2,r_2}_{(p+1)\e}(I_T)$ with 
\EQ{
  \bra{\frac1{q_2},\frac1{r_2}}:=p\bra{\frac1{q_0},\frac1{r_0}}+\bra{\frac1{q_1},\frac1{r_1}}.}
Therefore, if $\eta_0>0$ is small enough, then $\Phi$ is a contraction mapping on the closed $\eta$-ball of $X_T$, so the conclusion follows from the Banach fixed point theorem. 
\end{proof}

By Lemmas \ref{lem:a.s.linear0} and \ref{lem:contraction}, we can prove Theorem \ref{thm:main}.
\begin{proof}[Proof of Theorem \ref{thm:main}]
Take any $\e>0$ in \eqref{rang e}, and any $(q_1,r_1)\in[2,\I]^2$ satisfying \eqref{adm qr} and \eqref{dual qr}. 
In order to apply Lemma \ref{lem:contraction} to the randomized final-data $\m u_+^\omega$, we need some $(q_0,r_0)\in(0,\I]^2$ satisfying \eqref{adm qr}, \eqref{dual qr}, and 
\EQ{
 \|U(t)\m u_+^\omega\|_{X^{q_0,r_0}_\e(I_T)} <\eta_0}
for some $T\ge 1$ almost surely. In order to use Lemma \ref{lem:a.s.linear0}, we need 
\EQ{
 \e_0:=-\e+d(1/2-1/r_0)-1/q_0=1/q_0-\e>0.}
Among those $(q_0,r_0)$ satisfying \eqref{adm qr} and \eqref{dual qr}, the maximal $1/q_0$ is achieved when $r_0$ is the maximum in \eqref{dual qr}, namely $r_0=2(p+1)\in(2,\I)$, and by \eqref{adm qr}, 
\EQ{
 \frac1{q_0}=\frac{d}{4}-\frac{d}{2r_0}=\frac{dp}{4(p+1)}.}
Then \eqref{rang e} implies that $\e_0>0$, and so Lemma \ref{lem:a.s.linear0} implies 
\EQ{
 \|U(t)\m u_+^\omega\|_{L^\alpha_\omega(\Omega; X^{q_0,r_0}_\e(I_T))} \lec \sqrt\alpha \e_0^{-1/q_0}T^{-\e_0}\|\m u_+\|_{L^2(\R^d)}<\I.}

In particular, for almost every $\omega\in\Omega$, we have $U(t)\m u_+^\omega\in X^{q_0,r_0}_\e(I_1)$. 
The exceptional null set of $\omega$ can be chosen independent of $(q_0,r_0,\e)$, since there is a countable set of triples $(q_0,r_0,\e)$ such that the intersection of $X^{q_0,r_0}_\e(I_1)$ for those triples is included in that of any other triple satisfying the conditions. 

Since $q_0<\I$, the dominated convergence theorem implies that $\|U(t)\m u_+^\omega\|_{X^{q_0,r_0}_\e(I_T)}<\eta_0$ for sufficiently large $T$, and thus by Lemma \ref{lem:contraction}, there exists a unique local solution $\m{u}$ to \eqref{eq:NLS} on $I_T$ satisfying \eqref{diff est}. 
Applying the weighted Strichartz estimate once again as in the estimate on $\Phi(\m v)$ yields
\EQ{
 \|\m u-U(t)\m u_+^\omega\|_{X^{q,r}_\e(I_T)}<\I}
for all admissible $(q,r)$. This solution $\m u$ is extended uniquely to a global one satisfying  
\EQ{
 \m u \in (C(\R;L^2(\R^d)) \cap L^q_{loc}(\R;L^r(\R^d)))^N,}
by the global well-posedness in $L^2(\R^d)$ in the subcritical case $p<4/d$, see \cite{Tsutsumi-1987} for the case of \eqref{eq:power-NLS}. In the case of \eqref{eq:NLS}, the $L^2$ conservation is replaced with 
\EQ{
 \p_t\LR{\m u|\Lambda \m u}=2\LR{-M\Delta\m u + \m f(\m u) |i\Lambda u}=0,}
by Assumption \ref{ass:1} (ii), where $\LR{\m u|\m v}:=\Re\sum_{j=1}^N\int_{\R^d} u_j(x)\bar v_j(x)dx$ is the real inner product in $(L^2(\R^d))^N$. Since $\Lambda$ is positive definite, the above conservation law implies a priori bound of solutions in $L^2(\R^d)$, thereby the global well-posedness of \eqref{eq:NLS}. 
Thus for any $\e$ in \eqref{rang e}, we obtain a global solution $\m u$ satisfying \eqref{Str diff} for all $(q,r)$ in \eqref{adm qr}. It remains to show the uniqueness among all such solutions. 

Let $\m{u},\m{u}'\in(C_{t}(\R;L^{2}_{x}(\R^{d}))^N$ be two solutions
to \eqref{eq:NLS} satisfying \eqref{Str diff} for some admissible $(q_1,r_1)$ with \eqref{dual qr}, and some $\e$ in \eqref{rang e}. 
Then, the dominated convergence theorem implies that for any $\eta>0$ and for sufficiently large $T>1$, \eqref{diff est} holds both for $\m u$ and for $\m u'$. 
Then the uniqueness in Lemma \ref{lem:contraction} implies that $\m u=\m u'$ on $I_T$, and the well-posedness implies that $\m u=\m u'$ on the whole $\R$, which concludes Theorem \ref{thm:main}.
\end{proof}


\section{Application to the Gross-Pitaevskii Equation}
\label{sec:GP}

In this section, we study the final-data problem in the energy space for the Gross-Pitaevskii equation \eqref{eq:GP} in three space dimension. It is natural to put 
\EQ{
 u:=\psi-1=u_1+iu_2\in\R\oplus i\R,}
then the equation \eqref{eq:GP} is rewritten as 
\EQ{ \label{eq:GP-u}
 i\dot u + \De u - 2u_1 =  3u_1^2+ u_2^2 + |u|^2u_1 + i(2u_1u_2+|u|^2u_2).}
Following \cite[Section 4]{G-N-T-09}, we further transform the unknown function $u$ by 
\EQ{
 & U:=\sqrt{-\De(2-\De)^{-1}},
 \pq M(u):=u_1+iUu_2 + (2-\De)^{-1}|u|^2,
 \\& \z:=U^{-1}M(u),}
where $u\mapsto u_1+iUu_2$ is the natural $\R$-linear transform making the linearized evolution into a unitary group, while the quadratic transform $M$ is a local homeomorphism from the energy space to $H^1(\R^3)$ for $L^6(\R^3)$-small functions, playing crucial roles in the nonlinear analysis, removing some singular terms around the zero frequency. In fact, the energy is rewritten as  
\EQ{
 E(\psi) = \int_{\R^3} \frac{|\na u|^2}{2}+\frac{||u|^2+2u_1|^2}{4}dx = \int_{\R^3} \frac{|\na\z|^2}{2}+\frac{|U|u|^2|^2}{4}dx,}
while the equation is transformed into
\EQ{ \label{eq:ze}
 & i\p_t \z-H\z = N(u):=2u_1^2+|u|^2u_1 -i H^{-1}\na\cdot\{4u_1\na u_2+\na(|u|^2u_2)\},
 \\& H:=\sqrt{-\De(2-\De)}.}
Also recall from \cite{G-N-T-06} that the propagator $e^{-itH}$ for the linearized equation of $\z$ enjoys the same dispersive as the Schr\"odinger $e^{it\De}$ (possibly excepting $L^\I_x$). 
Actually it is better than $e^{it\De}$ in the low frequency, or gains some power of $U$. 
\begin{lem}[\cite{G-N-T-06}]
\label{lem:H-U-P}
Let $f:\R^{3}_{x}\to\C$ be measurable.
For $r\in[2,\infty)$ we have
\begin{align}
&\label{ineq:H-est1}
\nor{e^{-itH}f}_{L^r(\R^{3})}
\lesssim |t|^{-3(\frac{1}{2}-\frac{1}{r})}\nor{U^{\frac{1}{2}-\frac{1}{r}}f}_{L^{r'}(\R^{3})}.
\end{align}
\end{lem}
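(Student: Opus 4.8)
The plan is to derive \eqref{ineq:H-est1} by analytic interpolation between $r=2$ and the endpoint $r=\I$. The case $r=2$ is immediate, since $e^{-itH}$ is a unitary Fourier multiplier and $U^0=\mathrm{id}$. The endpoint amounts to the dispersive bound $\|e^{-itH}U^{-1/2}g\|_{L^\I}\lec|t|^{-3/2}\|g\|_{L^1}$, which by Young's inequality reduces to the kernel estimate $\sup_{x\in\R^3}|K_t(x)|\lec|t|^{-3/2}$ for the oscillatory integral
\EQ{
 K_t(x):=\int_{\R^3}e^{i(x\cdot\xi-t\,h(\xi))}\,u(\xi)^{-1/2}\,d\xi,\pq h(\xi):=|\xi|\ro{|\xi|^2+2},\pq u(\xi):=\frac{|\xi|}{\ro{|\xi|^2+2}}.
}
Granting this, the family $T_z:=e^{-itH}U^{-z/2}$, $0\le\Re z\le1$, is admissible for Stein's interpolation theorem: on $\Re z=0$ it is $L^2$-unitary, and on $\Re z=1$ the kernel bound persists up to a factor polynomial in $\abs{\Im z}$, produced by the $\xi$-derivatives of the unimodular factor $u^{-i\Im z/2}$ entering the stationary phase estimate. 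Interpolating at $\Re z=2(\tfrac12-\tfrac1r)$ then gives \eqref{ineq:H-est1} for every $r\in[2,\I)$. Thus everything reduces to the kernel bound.

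For the kernel I split $u^{-1/2}$ by a smooth cutoff into a high-frequency piece supported on $\{|\xi|\gtrsim1\}$ and a low-frequency piece supported on $\{|\xi|\lec1\}$. On the high-frequency piece $h(\xi)=|\xi|^2+O(1)$ with all derivatives bounded and $u^{-1/2}=1+O(|\xi|^{-2})$, so the Hessian $D^2h$ is uniformly nondegenerate there ($\det D^2h\sim1$); a dyadic decomposition in $|\xi|\sim2^k$ ($k\ge0$), rescaling, and nondegenerate stationary phase — a harmless perturbation of the explicit Schr\"odinger kernel $c\,|t|^{-3/2}e^{i|x|^2/4t}$ — bound this piece by $|t|^{-3/2}$, using that for each fixed $x$ only $O(1)$ dyadic blocks carry a stationary point (those with $|x|\sim2^k|t|$) while the rest decay rapidly in $x$.

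The low-frequency piece is the substantial part. There $h(\xi)=\ro2\,|\xi|+\tfrac{1}{2\ro2}|\xi|^3+O(|\xi|^5)$ is a curvature correction of the half-wave symbol $\ro2\,|\xi|$, the phonon regime of the Bogoliubov dispersion relation, and $u(\xi)^{-1/2}\sim|\xi|^{-1/2}$. Decomposing dyadically $|\xi|\sim2^k$ with $k<0$ and rescaling $\xi=2^k\eta$, the $k$-th block becomes $2^{5k/2}$ times an oscillatory integral over $\{|\eta|\sim1\}$ with smooth amplitude and phase $y\cdot\eta-\tau|\eta|-\sigma|\eta|^3-\cdots$, where $y=2^kx$, $\abs\tau\sim2^k|t|$, $\abs\sigma\sim2^{3k}|t|$, and $\abs\sigma\lec\abs\tau$. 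Stationary phase in $\eta$ — radial curvature of order $\abs\sigma$, angular curvatures of order $\abs\tau$ — yields the per-block bound $\lec 2^{5k/2}(1+\abs\sigma)^{-1/2}(1+\abs\tau)^{-1}$, which is $\lec|t|^{-3/2}$ for each $k$. Moreover the stationary set of the $k$-th block is confined to the thin cone shell $\bigl|\,|x|-\ro2\,|t|\,\bigr|\sim2^{2k}|t|$, so for a given $x$ at most $O(1)$ blocks contribute their full size and the remaining ones are nonstationary with rapid decay; the sum over $k<0$ is therefore controlled by $|t|^{-3/2}$ without a logarithmic loss.

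The main obstacle is exactly this last point. The radial curvature $\p_\rho^2h(\rho)\sim\rho$ of the dispersion relation degenerates as $\rho=|\xi|\to0$, so the bare propagator $e^{-itH}$ disperses only like $|t|^{-1}$ (wave-like) at low frequency; the weight $U^{1/2}\sim|\xi|^{1/2}$ is calibrated precisely to recover the missing $|t|^{-1/2}$, and the delicate task is to organize the dyadic sum using the light-cone localization $|x|\approx\ro2\,|t|$ so that this exact balance survives. Away from zero frequency everything is a routine perturbation of the Schr\"odinger dispersive estimate.
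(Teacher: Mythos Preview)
The paper does not prove this lemma; it is quoted verbatim from \cite{G-N-T-06}, so there is nothing to compare against in the present paper. Your outline is the standard route and is essentially what is carried out in \cite{G-N-T-06}: Stein interpolation between the unitary $L^2$ bound at $\Re z=0$ and the $L^1\to L^\I$ kernel estimate at $\Re z=1$, the latter proved by a high/low frequency split and stationary phase. Your identification of the obstacle is exactly right --- the radial curvature $h''(\rho)\sim\rho$ degenerates at $\rho\to0$, so bare $e^{-itH}$ disperses only like the half-wave ($|t|^{-1}$ in $\R^3$), and the weight $U^{1/2}\sim|\xi|^{1/2}$ is calibrated to restore the missing $|t|^{-1/2}$. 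The Hessian eigenvalues $\sim(\sigma,\tau,\tau)$ you compute on the rescaled $k$-th block are correct and give the per-block contribution $2^{5k/2}\sigma^{-1/2}\tau^{-1}=|t|^{-3/2}$ on the nose; the light-cone localization $\bigl||x|-\ro2|t|\bigr|\sim2^{2k}|t|$ of the stationary set is then the right mechanism to defeat the logarithmic loss in the dyadic sum over $k$ with $2^{3k}|t|\gtrsim1$. The sketch is sound and could be completed to a full proof along the lines of \cite{G-N-T-06}.
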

The above implies the same Strichartz estimate for $e^{-itH}$ as for the Schr\"odinger equation, which can be weighted as in Lemma \ref{prop:t-Strichartz}:
\begin{lem}
\label{lem:t-Strichartz 4H}
There exists a constant $C>0$ such that for any admissible pairs $(q_{0},r_{0})$, $(q_{1},r_{1})\in[2,\I]^2$ on $\R^3$, $\e\ge 0$, $T>0$ and $F\in X^{q_{1}',r_{1}'}_{\epsilon}(I_T)$, we have
\begin{align}
 \nor{\int_{\I}^{t}e^{-i(t-s)H}F(s)\,ds}_{X^{q_{0},r_{0}}_{\epsilon}(I_T)}
 \le C \|F\|_{X^{q_{1}',r_{1}'}_{\epsilon}(I_T)}.
\end{align}
\end{lem}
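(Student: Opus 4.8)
The plan is to reduce Lemma~\ref{lem:t-Strichartz 4H} to the \emph{unweighted} inhomogeneous Strichartz estimate for $e^{-itH}$, and then to insert the time weight $t^{\e}$ by exactly the device used in the proof of Lemma~\ref{prop:t-Strichartz}, which is insensitive to the particular propagator.

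First I would establish the unweighted estimate. For $s\in[0,\tfrac12]$ the operator $U^{s}$ has Fourier multiplier $\bigl(|\xi|^{2}/(2+|\xi|^{2})\bigr)^{s/2}$, which is smooth away from the origin, behaves like $|\xi|^{s}$ near $0$, and tends to $1$ at infinity with symbol-type derivative bounds; hence it satisfies the Mikhlin--H\"ormander condition and $U^{s}$ is bounded on $L^{p}(\R^{3})$ for $1<p<\infty$. Taking $s=\tfrac12-\tfrac1r$ and $p=r'$ (note $1<r'\le 2$ since $2\le r<\infty$), Lemma~\ref{lem:H-U-P} yields the standard dispersive bound $\|e^{-itH}f\|_{L^{r}}\lesssim|t|^{-3(\frac12-\frac1r)}\|f\|_{L^{r'}}$, which together with the $L^{2}$-isometry $\|e^{-itH}f\|_{L^{2}}=\|f\|_{L^{2}}$ places us in the setting of Keel and Tao. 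Their theorem then gives, for all admissible pairs $(q_{0},r_{0}),(q_{1},r_{1})\in[2,\infty]^{2}$ on $\R^{3}$ --- all of which have $r\in[2,6]$, so the endpoint $(2,6)$ occurs but never $r=\infty$, and the multiplier bound always applies --- the inhomogeneous estimate $\bigl\|\int_{-\infty}^{t}e^{-i(t-s)H}F\,ds\bigr\|_{L^{q_{0}}_{t}L^{r_{0}}_{x}}\lesssim\|F\|_{L^{q_{1}'}_{t}L^{r_{1}'}_{x}}$, and the same with $\int_{\infty}^{t}$ by time reversal; in fact this unweighted estimate is already recorded in~\cite{G-N-T-06}.

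Next I would insert the weight, repeating the argument of Lemma~\ref{prop:t-Strichartz} verbatim with $e^{-itH}$ in place of $U(t)$. For $q_{0}<\infty$, set $g(t):=(\e q_{0})^{1/q_{0}}t^{\e-1/q_{0}}$; then for $t>T$ one has $t^{\e q_{0}}=T^{\e q_{0}}+\|g(\tau)I_{\tau}(t)\|_{L^{q_{0}}_{\tau}(I_{T})}^{q_{0}}$, which splits the weighted norm of $\int_{\infty}^{t}e^{-i(t-s)H}F\,ds$ into a term carrying the constant weight $T^{\e}$, bounded by $T^{\e}\|F\|_{X^{q_{1}',r_{1}'}_{0}(I_{T})}\le\|F\|_{X^{q_{1}',r_{1}'}_{\e}(I_{T})}$, and a term in which, after Fubini and Minkowski's inequality (using $q_{0}\ge q_{1}'$ to pass $L^{q_{1}'}_{t}L^{q_{0}}_{\tau}\subset L^{q_{0}}_{\tau}L^{q_{1}'}_{t}$) and the unweighted estimate above applied on each $I_{\tau}\subset I_{T}$, one is left with $\|g(\tau)I_{\tau}(t)F(t,x)\|_{L^{q_{1}'}_{t}L^{q_{0}}_{\tau}L^{r_{1}'}_{x}}\le\|F\|_{X^{q_{1}',r_{1}'}_{\e}(I_{T})}$. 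For $q_{0}=\infty$ (hence $r_{0}=2$) I would argue directly: by the $L^{2}$-isometry and dual Strichartz, $\bigl\|\int_{\infty}^{t}e^{-i(t-s)H}F\,ds\bigr\|_{L^{2}_{x}}\lesssim\|F\|_{L^{q_{1}'}_{s}L^{r_{1}'}_{x}(I_{t})}$, and since $t\le s$ on $I_{t}$ one may absorb $t^{\e}\le s^{\e}$ under the $s$-integral. Combining the two cases proves the lemma.

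I do not expect a genuine obstacle here. The only point requiring a little care is the $L^{p}$-boundedness of the fractional operator $U^{s}$ near the zero frequency, which is a routine Mikhlin multiplier computation; everything else is the propagator-independent weight transfer already carried out for $U(t)=e^{itM\Delta}$ in Lemma~\ref{prop:t-Strichartz}.
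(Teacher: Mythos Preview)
Your proposal is correct and follows precisely the approach the paper indicates: the paper does not give a separate proof of Lemma~\ref{lem:t-Strichartz 4H} but simply remarks, in the sentence preceding it, that the dispersive estimate of Lemma~\ref{lem:H-U-P} yields the same Strichartz estimate for $e^{-itH}$ as for the Schr\"odinger equation, which can then be weighted exactly as in Lemma~\ref{prop:t-Strichartz}. Your write-up just fills in those two steps, including the routine Mikhlin bound for $U^{s}$ needed to pass from \eqref{ineq:H-est1} to the standard dispersive decay.
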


It also implies that scattering solutions are vanishing in $L^6(\R^3)$, hence the energy space for the asymptotic profiles of $\z=U^{-1}M(u)$ is $U^{-1}H^1(\R^3)=\dot H^1(\R^3)$. 

For the final-data problem, existence of a solution $u$ of \eqref{eq:GP-u} such that $\|M(u)-e^{-iHt}z_+\|_{H^1(\R^3)}\to 0$ is given for any $z_+\in H^1(\R^3)$ by \cite[Theorem 1.2]{G-N-T-09}. 
In terms of $\z=U^{-1}M(u)$, it means the existence of a scattering solution $\z$ of \eqref{eq:ze} in $\dot H^1(\R^3)$. 
We consider a randomized version with uniqueness. 
Note that the physical randomization in Definition \ref{def:random-L2} does not commute with derivatives, unlike similar randomization in the Fourier side. If we directly apply that randomization to a final-state in Sobolev spaces, then estimates on the randomized data and the corresponding free solution get extra terms from differentiating the partition of unity $\chi_k$.  
Therefore, it seems more natural to apply the randomization after transforming the energy space onto $L^2(\R^3)$. 

\begin{defi}[$\dot H^1$-randomization]
\label{def:random-dotH1}
For $\varphi\in \dot H^1(\R^{3})$,
we define its randomization $\varphi^{\omega,1}$ by
\EQ{ \label{random-dotH1}
 \fy^{\omega,1}:=|\na|^{-1}(|\na|\varphi)^{\omega}=|\na|^{-1}\sum_{k\in\Z^d}G_k(\om)\chi_k |\na|\fy.}
\end{defi}

Then we obtain the following, which is a randomized version of \cite[Theorem 1.2]{G-N-T-09}.  
\begin{thm}
\label{thm:main2}
For any $\z_{+}\in \dot H^1(\R^{3})$ and for almost every $\omega\in\Omega$,
there exists a unique global solution $u\in C_{t}(\R;H^1(\R^3))$ to \eqref{eq:GP-u} such that $v:=U^{-1}u_1+iu_2$ satisfies 
\EQ{ \label{Str diff GP}
 \|t^\e\LR{\na}(v-e^{-itH}\z_+^{\om,1})\|_{L^q_tL^r_x(I_T\times\R^3)}<\I,}
for some $T>0$, all admissible pair $(q,r)$, and all $\e\in(1/4,3/8)$. 
The uniqueness holds under the condition \eqref{Str diff GP} for a fixed $(q,r,\e)$ with $r\in[12/5,4]$. Moreover,  
\EQ{
 \|v(t)-e^{-itH}\z_+^{\om,1}\|_{H^1(\R^3)} + \|\z(t)-e^{-itH}\z_+^{\om,1}\|_{H^1(\R^3)} = o(t^{-\e})}
as $t\to\I$ for all $\e<3/8$, namely the scattering holds for $v$, $\z$ and $M(u)=U\z$ in $H^1(\R^3)$. 
\end{thm}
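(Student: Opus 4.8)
The plan is to mirror the proof of Theorem \ref{thm:main}, but carried out for the equation \eqref{eq:ze} for $\z$ (equivalently \eqref{eq:GP-u} for $u$) in the energy space, using the transform $M$ to reduce the energy space to $H^1(\R^3)$ and then conjugating by $|\na|$ to reduce $H^1$ to $L^2$. Concretely, set $z:=|\na|\z$ and $z_+:=|\na|\z_+$, so that $z_+\in L^2(\R^3)$ and $\z_+^{\om,1}=|\na|^{-1}z_+^\om$ with the ordinary $L^2$-randomization of Definition \ref{def:random-L2} applied to $z_+$. First I would record that the free solution $e^{-itH}\z_+^{\om,1}$ satisfies, almost surely, the same time-weighted dispersive bounds as in Lemma \ref{lem:a.s.linear0}: indeed $|\na| e^{-itH}\z_+^{\om,1}=e^{-itH}z_+^\om$ and, by Lemma \ref{lem:H-U-P}, $e^{-itH}$ obeys the Schr\"odinger dispersive estimate on $\R^3$ (with a low-frequency gain we may simply discard), so the proof of Lemma \ref{lem:a.s.linear0} goes through verbatim with $U(t)$ replaced by $e^{-itH}$, $\m\fy$ by $z_+$, yielding $\|\LR{\na} e^{-itH}\z_+^{\om,1}\|_{L^\alpha_\omega(\Omega;X^{a,b}_\e(I_T))}\lec \sqrt\alpha\,\e_0^{-1/a}T^{-\e_0}\|z_+\|_{L^2}$ under \eqref{cond eps}. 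Here $d=3$, so the relevant exponent window is $\e\in(1/4,3/8)$, coming from \eqref{rang e} with $p=1$: $1/p-d/4=1-3/4=1/4$ and $dp/(4(p+1))=3/8$; note $p=1>p_1(3)$ and also $p=1>4/(d+2)=4/5$, so the range of admissible $(q,r,\e)$ with $q\e<1$, $r<\I$ is non-empty and the free solution itself has finite weighted norm almost surely.

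Next I would set up the contraction for $v^1:=v-e^{-itH}\z_+^{\om,1}$, or rather for $\z^1:=\z-e^{-itH}\z_+^{\om,1}$, on a time-weighted Strichartz ball on $I_T$, exactly as in Lemma \ref{lem:contraction}. The nonlinearity $N(u)$ in \eqref{eq:ze} is a sum of a genuinely quadratic term $2u_1^2$, a cubic term $|u|^2u_1$, and terms of the form $H^{-1}\na\cdot(u\,\na u)$ (quadratic) and $H^{-1}\na\cdot\na(|u|^2u_2)$ (cubic). The quadratic pieces are exactly at the Strauss exponent $p=1$ for $d=3$, which is precisely the borderline case now accessible because $p_1(3)<1$. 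The operators $H^{-1}\na\cdot$ and $H^{-1}\na\cdot\na$ are bounded on the relevant $L^r$-based spaces after pairing with one derivative — more precisely $\LR{\na}H^{-1}\na\cdot\na$ is order-one and $\LR{\na}H^{-1}\na\cdot$ is bounded, uniformly up to the low frequency where $H^{-1}$ even gains — so that working in the $\LR{\na}$-weighted Strichartz space one estimates $\LR{\na}N(u)$ by products $\LR{\na}u\cdot u$ and $(\LR{\na}u) u^2$ via the fractional Leibniz/Kato-Ponce rule and Hölder, landing in the dual-Strichartz-type space $\ti X^0_T+\ti X^1_T$ with $(p+1)\e$-weight, $p=1$. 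Then the time-Hölder gain $T^{1/a-p\e}$ with $1/a=1-dp/4=1/4<p\e=\e$ (valid exactly when $\e>1/4$) makes the map a contraction on the $\eta$-ball for $T$ large, and the weighted Strichartz Lemma \ref{lem:t-Strichartz 4H} closes the estimate. The computation in Lemma \ref{lem:contraction} showing $1/\hat q_j'-1/\ti q_j=1-dp/4<p\e$ is reproduced with $d=3$, $p=1$. I would choose $(q_0,r_0)$ with $r_0=2(p+1)=4$, giving $1/q_0=dp/(4(p+1))=3/8$ and $\e_0=3/8-\e>0$, and the admissible pair $(q_1,r_1)$ with $r_1/(p+1)=r_1/2\in[\max(1,6/5),2]=[6/5,2]$, i.e. $r_1\in[12/5,4]$, matching the uniqueness range in the statement; Cauchy–Schwarz in the large-deviation bound forces no loss since all required exponents are finite.

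Having obtained a unique local-in-time-near-infinity solution $\z$ of \eqref{eq:ze} in the weighted space, hence $u$ via the inverse of the local homeomorphism $M$ (using that the solution is small in $L^6_x$ on $I_T$, which follows from the $X^{q_0,r_0}_\e$-bound and Sobolev), I would extend it globally and upgrade the convergence. Global extension uses the global well-posedness of \eqref{eq:GP} in the energy space due to Gérard \cite{Gerard} together with conservation of the energy $E(\psi)$, which controls $\|\na\z\|_{L^2}$ and $\|U|u|^2\|_{L^2}$; combined with the a priori control of $\|u\|_{L^2+L^6}$ this gives an a priori $H^1$-type bound so the local solution does not blow up, and on bounded time intervals the solution is unique by the deterministic energy-space theory, so patching with the near-infinity solution gives a global $u\in C_t(\R;H^1)$. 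For the improved asymptotics, I would apply Lemma \ref{lem:t-Strichartz 4H} once more to the Duhamel term to get $\|t^\e\LR{\na}(v-e^{-itH}\z_+^{\om,1})\|_{L^q_tL^r_x(I_T)}<\I$ for every admissible $(q,r)$, in particular $(q,r)=(\I,2)$ up to replacing $\e$ by a slightly larger value still below $3/8$, which gives $\|v(t)-e^{-itH}\z_+^{\om,1}\|_{H^1}=o(t^{-\e})$; the same bound transfers to $\z$ and to $M(u)=U\z$ because $U$ is bounded and $\z-v=(2-\De)^{-1}|u|^2$ (times harmless bounded operators) is a quadratic remainder controlled by $\|u\|^2_{L^4_x}$, decaying at the doubled rate. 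Finally, uniqueness of the global solution satisfying \eqref{Str diff GP} for one admissible $(q,r)$ with $r\in[12/5,4]$ and one $\e\in(1/4,3/8)$ follows as in Theorem \ref{thm:main}: two such solutions both enter the $\eta$-ball on $I_T$ for $T$ large by dominated convergence, hence coincide there by the contraction's uniqueness, hence everywhere by backward uniqueness in the energy space.

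The main obstacle I anticipate is the nonlinear estimate for the nonlocal quadratic terms $H^{-1}\na\cdot(u\,\na u)$ at the borderline power $p=1$: one must verify that, after conjugating with $\LR{\na}$, these terms are controlled in the weighted dual-Strichartz norm $\ti X^0_T+\ti X^1_T$ with the correct $2\e$-weight, which requires (i) the mapping properties of $\LR{\na}H^{-1}\na\cdot$ and $\LR{\na}H^{-1}\na\cdot\na$ uniformly including the low-frequency regime where they must not lose — this is exactly where the low-frequency gain in Lemma \ref{lem:H-U-P} (and the structure of $M$ removing the worst zero-frequency singularities, as emphasized in \cite{G-N-T-09}) is used — and (ii) a fractional Leibniz rule to distribute the single derivative, keeping all Hölder exponents inside the admissible ranges forced by \eqref{dual qr} with $d=3$, $p=1$. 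Everything else is a routine transcription of Sections \ref{sec:propositions}–\ref{sec:proof} with $U(t)\rightsquigarrow e^{-itH}$ and an extra $\LR{\na}$ throughout.
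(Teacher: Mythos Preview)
Your overall strategy mirrors the paper's, but there is a genuine gap in the linear estimate. You write that $|\na| e^{-itH}\z_+^{\om,1}=e^{-itH}z_+^\om$ and that Lemma~\ref{lem:a.s.linear0} then ``goes through verbatim'' to give a bound on $\|\LR{\na} e^{-itH}\z_+^{\om,1}\|_{X^{a,b}_\e(I_T)}$, adding that the low-frequency gain in Lemma~\ref{lem:H-U-P} ``we may simply discard''. This does not work: the verbatim argument controls $|\na| e^{-itH}\z_+^{\om,1}$, not $\LR{\na} e^{-itH}\z_+^{\om,1}=U^{-1}e^{-itH}z_+^\om$, and the missing factor $U^{-1}=\LR{\na}|\na|^{-1}$ is singular at frequency zero. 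To recover it one must bound $\|e^{-itH}|\na|^{-1}(\chi_k z_+)\|_{L^b_x}$ after the large-deviation step; with the plain Schr\"odinger dispersive estimate this requires the Sobolev embedding $\||\na|^{-1}\fy\|_{L^{b'}}\lec\|\fy\|_{L^{\bar b}}$ with $1/\bar b=1/b'+1/3<1$, i.e.\ $b<3$. But your own choice $r_0=4$ (and indeed any $\e>1/4$, since $\e_0>0$ forces $r>2/(1-4\e/3)>3$) needs $b>3$. The paper closes this gap precisely by \emph{not} discarding the gain: using $\|e^{-itH}f\|_{L^b}\lec t^{-3(1/2-1/b)}\|U^{1/2-1/b}f\|_{L^{b'}}$ one only has to invert $U^{1/2+1/b}$, and Sobolev then allows $b<4$ (see Lemma~\ref{lem:a.s.linear02} and the Remark following it, which states explicitly that the argument fails without the gain).

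You also misplace where the low-frequency issue actually bites. You flag the multipliers $\LR{\na}H^{-1}\na\cdot$ and $\LR{\na}H^{-1}\na\cdot\na$ in the nonlinear estimate as the main obstacle, but these are order~$0$ symbols uniformly in frequency (the first is $\LR{\xi}|\xi|^{-1}\LR{\xi}^{-1}\xi=\xi/|\xi|$, the second is $\LR{\xi}|\xi|^{-1}\LR{\xi}^{-1}|\xi|^2=|\xi|$, paired back against $\LR{\na}^{-1}$ in the $H^1_{r/2}$ norm), so they are bounded on $L^r$ for $1<r<\I$ with no help needed from Lemma~\ref{lem:H-U-P}; the paper's Lemma~\ref{lem:contraction2} handles them by elementary product estimates. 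The delicate low-frequency step is entirely in the randomized \emph{linear} estimate. A secondary point: for the $L^3$-smallness needed to invert $M$ (your step via the ``local homeomorphism''), the paper uses that $\z_+^{\om,1}\in H^1(\R^3)$ almost surely (Lemma~\ref{prop:random-L2-range}), which you should invoke rather than deducing $L^6$-smallness from the weighted Strichartz norm alone; and the global extension is via well-posedness in $1+H^1(\R^3)$ \cite{B-S}, since the solution lands in that smaller space.
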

The above conditions on $q,r,\e$ are the same as in Theorem \ref{thm:main} for $d=3$ and $p=1$. 
Even though $\z_{+}\in \dot H^1(\R^{3})$,
$z$ and $v$ scatter in $H^1(\R^{3})$ almost surely, and correspondingly, the solution $\psi$ belongs to the smaller space $1+H^1(\R^3)$, where the global well-posedness was proved by Bethuel and Saut \cite{B-S}. 
This is because the randomization by Murphy improves the low frequency. 
It makes the quadratic term in the transform $M$ asymptotically negligible as $t\to\I$ in $H^1(\R^3)$, which is in contrast to the deterministic case, cf.~\cite[Remark 4.1]{G-N-T-09}. 
Actually the following implies that $\z_+^{\om,1}\in\dot H^s$ for $-1/2<s\le 1$ almost surely. 
\begin{lem}
\label{prop:random-L2-range}
Let $d\in\N$, $\phi\in L^{2}(\R^{d})$. 
Then we have $\phi^{\omega}\in\dot{H}^{s}(\R^{d})$ almost surely for all $s\in(-d/2,0]$. 
If $\phi\in \dot H^1(\R^d)$, then $\phi^{\omega,1}\in\dot H^s(\R^d)$ almost surely for all $s\in(1-d/2,1]$. 
\end{lem}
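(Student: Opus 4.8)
\textbf{Proof plan for Lemma \ref{prop:random-L2-range}.}

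The plan is to reduce everything to the $L^2$ case and to the large deviation estimate (Lemma \ref{cor:LDE}), exactly as in the proof of Lemma \ref{lem:a.s.linear0}. The starting observation is that $\fy^{\omega,1}=|\na|^{-1}(|\na|\fy)^\omega$, so writing $\phi=|\na|\fy\in L^2(\R^d)$, the statement $\fy^{\omega,1}\in\dot H^s$ is literally the statement $\phi^\omega\in\dot H^{s-1}$ with $\phi\in L^2$; hence the second assertion follows from the first applied with $\phi$ in place of $\fy$ and $s-1$ in place of $s$, and the range $(1-d/2,1]$ for $s$ corresponds to $(-d/2,0]$ for $s-1$. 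So it suffices to prove the first assertion: for $\phi\in L^2(\R^d)$ and $s\in(-d/2,0]$ we have $\phi^\omega\in\dot H^s(\R^d)$ almost surely.

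For this I would fix $s\in(-d/2,0]$ and estimate $\||\na|^s\phi^\omega\|_{L^2_x}$ in $L^\alpha_\omega(\Omega)$ for large finite $\alpha$, then use that finiteness of this quantity for one $\alpha<\I$ already gives $\phi^\omega\in\dot H^s$ a.s. By Minkowski's inequality one brings the $L^\alpha_\omega$ norm inside the $L^2_x$ norm, and then Lemma \ref{cor:LDE} gives
\EQ{
 \big\|\,|\na|^s\phi^\omega\,\big\|_{L^\alpha_\omega(\Omega;\dot H^s_x)}
 \lec \sqrt\alpha\,\big\|\,\| \, |\na|^s(\chi_k\phi)\, \|_{\ell^2_k(\Z^d)}\,\big\|_{L^2_x(\R^d)}
 = \sqrt\alpha\,\big\|\,\| \, |\na|^s(\chi_k\phi)\, \|_{L^2_x(\R^d)}\,\big\|_{\ell^2_k(\Z^d)}.
}
So the whole matter is reduced to the deterministic bound
\EQ{ \label{eq:det-sob}
 \sum_{k\in\Z^d}\big\|\,|\na|^s(\chi_k\phi)\,\big\|_{L^2(\R^d)}^2 \lec \|\phi\|_{L^2(\R^d)}^2,
}
after which one takes countably many $s$ approaching $-d/2$ (and $s=0$) to get a single null set, using that $\dot H^{s_1}\cap\dot H^{s_2}\subset\dot H^s$ for $s$ between $s_1$ and $s_2$.

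The main obstacle is precisely \eqref{eq:det-sob}: unlike the $L^r$ estimates in Lemma \ref{lem:a.s.linear0}, here one needs a fractional \emph{negative} Sobolev norm of a function multiplied by a compactly supported bump, summed over the lattice. For $s=0$ it is immediate from $\sum_k\chi_k^2\sim1$. For $s\in(-d/2,0)$ I would argue by interpolation/duality: $\dot H^s$ for $s\in(-d/2,0)$ embeds the dual exponent Lebesgue space, i.e. $L^{q}(\R^d)\subset \dot H^{s}(\R^d)$ with $\frac1q=\frac12-\frac sd$ (note $s<0$ so $q<2$, and $q>1$ since $s>-d/2$) by the Hardy–Littlewood–Sobolev / fractional integration inequality. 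Hence $\|\,|\na|^s(\chi_k\phi)\|_{L^2}\lec \|\chi_k\phi\|_{L^q}$, and since each $\chi_k$ has support in a fixed-size ball, Hölder on that ball gives $\|\chi_k\phi\|_{L^q}\lec \|\chi_k\phi\|_{L^2}$ with a constant uniform in $k$ (using $q\le2$). Therefore $\sum_k\|\,|\na|^s(\chi_k\phi)\|_{L^2}^2\lec \sum_k\|\chi_k\phi\|_{L^2}^2 \lec \|\phi\|_{L^2}^2$, which is \eqref{eq:det-sob}. The only delicate point to check is the borderline behavior as $s\to-d/2$ (equivalently $q\to1$): the embedding constant in $L^q\subset\dot H^s$ blows up there, but for each fixed $s>-d/2$ it is finite, which is all that is needed since we only claim membership for each such $s$ separately; choosing a sequence $s_n\downarrow -d/2$ then yields the a.s. statement for all $s\in(-d/2,0]$ with a single exceptional null set, completing the proof. $\bbox$
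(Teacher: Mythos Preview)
Your proof is correct and follows essentially the same route as the paper's: Minkowski to bring $L^\alpha_\omega$ inside $L^2_x$, the large deviation estimate (Lemma~\ref{cor:LDE}), the Hardy--Littlewood--Sobolev embedding $L^q\hookrightarrow\dot H^s$ with $1/q=1/2-s/d$ for $s\in(-d/2,0)$, H\"older on the uniform compact support of $\chi_k$, and finally a countable-density argument in $s$; the reduction of the $\dot H^1$ statement to the $L^2$ one via $\fy^{\omega,1}=|\na|^{-1}(|\na|\fy)^\omega$ is exactly what the paper does. The only slip is notational: in your displayed estimate the left-hand side should read $\|\phi^\omega\|_{L^\alpha_\omega(\Omega;\dot H^s_x)}$ (or equivalently $\||\na|^s\phi^\omega\|_{L^\alpha_\omega(\Omega;L^2_x)}$), not $\||\na|^s\phi^\omega\|_{L^\alpha_\omega(\Omega;\dot H^s_x)}$.
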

\begin{proof}
Let $\phi\in L^2(\R^d)$ and $s\in(-d/2,0]$. 
For all $\alpha\in[2,\infty)$, 
using Sobolev's inequality with $\frac{1}{r}:=\frac{1}{2}-\frac{s}{d}$,
it holds that
\EQ{
&
\nor{\phi^{\omega}}_{L^{\alpha}_{\omega}\dot{H}^{s}_{x}(\Omega\times\R^{d})}
\le\nor{\nor{\sum\nolimits_{k\in\Z^{d}}g_{k}(\omega)|\nabla|^{s}(\psi_{k}\phi)}
	_{L^{\alpha}_{\omega}(\Omega)}}_{L^{2}_{x}(\R^{d})}
\\&\lesssim\nor{\big\||\nabla|^{s}(\psi_{k}\phi)\big\|
	_{\ell^{2}_{k}(\Z^{d})}}_{L^{2}_{x}(\R^{d})}
 =\nor{\big\||\nabla|^{s}(\psi_{k}\phi)\big\|
	_{L^{2}_{x}(\R^{d})}}_{\ell^{2}_{k}(\Z^{d})}
\\&\lesssim\nor{\|\psi_{k}\phi\|_{L^{r}_{x}(\R^{d})}}_{\ell^{2}_{k}(\Z^{d})}
\lesssim\nor{\|\psi_{k}\phi\|_{L^{2}_{x}(\R^{d})}}_{\ell^{2}_{k}(\Z^{d})}
\sim\|\phi\|_{L^{2}_{x}(\R^{d})}.
}
Thus we deduce that $\phi^\omega\in\dot H^s$ for almost every $\om$ and for all $s\in(-d/2,0]$, taking a countable dense set of $s$ in $(-d/2,0]$ including $0$. 
Then the claim on $\phi^{\omega,1}$ follows from the fact that $|\na|$ is an isomorphism from $\dot H^{s+1}$ onto $\dot H^{s}$. 
\end{proof}

In particular, the reference free solution $e^{-itH}\z_+^{\om,1}\in L^\I(\R;H^1(\R^3))$ almost surely, and then it decays in $L^3(\R^3)$ because of the dispersive decay estimate \eqref{ineq:H-est1} together with the Sobolev embedding $H^1(\R^3)\subset L^3(\R^3)$. 
The $L^3$-smallness makes it easy to invert the quadratic transform $M$ as follows. 
\begin{lem} \label{lem:z-u}
There exists a constant $\eta_*>0$ with the following property. Let $B_*:=\{\fy\in H^1(\R^3)\mid \|\fy\|_{L^3(\R^3)}\le\eta_*\}$. Then for any $\fy\in B_*$, there exists a unique $u\in H^1(\R^3)$ satisfying 
\EQ{
 U\fy = M(u), \pq \|u\|_{L^3(\R^3)} \le 2\|U\fy_1+i\fy_2\|_{L^3(\R^3)}.}
Let $g:B_*\to H^1(\R^3)$ be the map defined by $g(\fy):=u$ given above. Then there exists a constant $C>0$ such that for any $r\in[2,6]$, $s\in[0,1]$ and $\fy,\psi\in B_*\cap H^s_r(\R^3)$, we have 
\EQ{
 \|g(\fy)-g(\psi)\|_{H^s_r(\R^3)} \le C\|\fy-\psi\|_{H^s_r(\R^3)}.}
\end{lem}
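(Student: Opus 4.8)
The plan is to reduce the scalar equation $M(u)=U\fy$ to a fixed point for the real part of $u$ alone. Writing $u=u_1+iu_2$, $\fy=\fy_1+i\fy_2$ and separating real and imaginary parts in $u_1+iUu_2+(2-\De)^{-1}\abs{u}^2=U\fy_1+iU\fy_2$ gives $Uu_2=U\fy_2$ and $u_1=U\fy_1-(2-\De)^{-1}(u_1^2+\fy_2^2)$. Since the Fourier symbol $\abs{\xi}(2+\abs{\xi}^2)^{-1/2}$ of $U$ vanishes only at the origin, the first relation forces $u_2=\fy_2$ for any $u_2\in L^2(\R^3)$; so I would set $u_2:=\fy_2$ and solve
\EQ{ \label{eq:gfix}
 u_1=\Phi_\fy(u_1):=U\fy_1-(2-\De)^{-1}\bra{u_1^2+\fy_2^2}
}
for $u_1$, then set $g(\fy):=u_1+i\fy_2$.

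First I would run the contraction for \eqref{eq:gfix}. It rests on two facts: $U$ is bounded on $L^3(\R^3)$ — its symbol obeys the H\"ormander--Mikhlin bound $\abs{\xi}^{\abs{\al}}\abs{\p^\al m(\xi)}\lec 1$, the apparent singularity of $\abs{\xi}$ at the origin being killed by the weight $\abs{\xi}^{\abs{\al}}$ — so that $\nor{U\fy}_{L^3}\lec\nor{\fy}_{L^3}\le\eta_*$; and $(2-\De)^{-1}$ maps $L^{3/2}(\R^3)$ into $W^{2,3/2}(\R^3)\hookrightarrow L^3(\R^3)$, so $\nor{(2-\De)^{-1}(vw)}_{L^3}\lec\nor{v}_{L^3}\nor{w}_{L^3}$. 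For $\eta_*$ small, $\Phi_\fy$ is then a self-map and a contraction of the closed ball $\set{v\in L^3(\R^3)}{\nor{v}_{L^3}\le R}$ with $R:=2\nor{U\fy_1+i\fy_2}_{L^3}$ (the quadratic term being $O(R^2)=O(\eta_*R)$), which produces the unique $u_1$ there, hence $g(\fy)$. Uniqueness of $u\in H^1$ subject to the stated $L^3$ bound is immediate, since any such $u$ has $u_2=\fy_2$ and $u_1$ in that ball. The sharp constant $2$ in $\nor{u}_{L^3}\le 2\nor{U\fy_1+i\fy_2}_{L^3}$ would come from a bootstrap: with $w:=(2-\De)^{-1}\abs{u}^2$ one has $\abs{u}^2=u_1^2+\fy_2^2=\bra{(U\fy_1)^2+\fy_2^2}-2(U\fy_1)w+w^2$, so taking $L^{3/2}$ norms and using $\nor{w}_{L^3}\lec\nor{u}_{L^3}^2$ together with the crude bound $\nor{u}_{L^3}\le 3\nor{U\fy_1+i\fy_2}_{L^3}$ yields a quadratic inequality in $\nor{u}_{L^3}$ that closes at $2\nor{U\fy_1+i\fy_2}_{L^3}$ once $\eta_*$ is small.

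Next I would upgrade regularity. From $u_1\in L^3$ and $\fy_2\in H^1(\R^3)\subset L^2\cap L^6$ one gets $(2-\De)^{-1}(u_1^2+\fy_2^2)\in W^{2,3/2}(\R^3)\hookrightarrow L^2(\R^3)$ and $U\fy_1\in H^1(\R^3)$, hence $u_1\in L^2\cap L^3\subset L^{12/5}$; then $u_1^2+\fy_2^2\in L^{6/5}(\R^3)$, so $(2-\De)^{-1}(u_1^2+\fy_2^2)\in W^{2,6/5}(\R^3)\hookrightarrow H^1(\R^3)$, giving $u_1\in H^1$ and $u=g(\fy)\in H^1(\R^3)$. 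The same argument gives $g(\fy)\in H^s_r(\R^3)$ whenever in addition $\fy\in H^s_r(\R^3)$ with $r\in[2,6]$, $s\in[0,1]$: then $u_1\in H^1\subset L^q$ for all $q\in[2,6]$ and $\fy_2\in H^1\cap H^s_r$, so $u_1^2+\fy_2^2\in L^p$ for $\tfrac1p=\tfrac13+\tfrac1r$ (note $2p\in[\tfrac{12}{5},4]\subset[2,6]$), and $(2-\De)^{-1}$ maps $L^p$ into $W^{2,p}(\R^3)\hookrightarrow H^s_r(\R^3)$ — the last embedding holding precisely because $s\le1$ — while $U\fy_1\in H^s_r$ by boundedness of $U$ on $H^s_r$ for $1<r<\I$.

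Finally, for the Lipschitz bound I would subtract \eqref{eq:gfix} for $u=g(\fy)$ and $\ti u=g(\psi)$, together with $u_2-\ti u_2=\fy_2-\psi_2$, to get
\EQ{
 u_1-\ti u_1=U(\fy_1-\psi_1)-(2-\De)^{-1}\sqb{(u_1+\ti u_1)(u_1-\ti u_1)+(\fy_2+\psi_2)(\fy_2-\psi_2)},
}
and estimate in $H^s_r(\R^3)$ using boundedness of $U$ there and the bilinear estimate $\nor{(2-\De)^{-1}(ab)}_{H^s_r}\lec\nor{a}_{L^3}\nor{b}_{H^s_r}$ (H\"older $L^3\cdot L^r\hookrightarrow L^p$ with $\tfrac1p=\tfrac13+\tfrac1r$, then $(2-\De)^{-1}\colon L^p\to W^{2,p}\hookrightarrow H^s_r$). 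Since $\nor{u_1+\ti u_1}_{L^3}$ and $\nor{\fy_2+\psi_2}_{L^3}$ are $O(\eta_*)$, the term $\nor{u_1+\ti u_1}_{L^3}\nor{u_1-\ti u_1}_{H^s_r}$ can be absorbed into the left-hand side — legitimately, because $\nor{u_1-\ti u_1}_{H^s_r}<\I$ by the previous step — leaving $\nor{g(\fy)-g(\psi)}_{H^s_r}\lec\nor{\fy-\psi}_{H^s_r}$ with a constant uniform in $r\in[2,6]$, $s\in[0,1]$. Choosing $\eta_*$ small enough to satisfy every smallness requirement at once completes the proof. I expect the only real work to be the bookkeeping: checking the Mikhlin condition for the symbol of $U$ in spite of its origin singularity, and checking that the exponents in the repeated use of $(2-\De)^{-1}$ and Sobolev embedding line up over the full ranges $r\in[2,6]$, $s\in[0,1]$, where the binding constraint is always $s\le1$.
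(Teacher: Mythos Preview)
Your proof is correct and follows essentially the same contraction-mapping approach as the paper: both invert $M$ by solving $u=U\fy_1+i\fy_2-(2-\De)^{-1}|u|^2$ via Banach's fixed point theorem, the key estimate in either case being that $(2-\De)^{-1}$ gains two derivatives so that $(2-\De)^{-1}(L^3\cdot L^r)\hookrightarrow H^1_r$ for $r\in[2,6]$. The only organizational differences are that the paper iterates on the full complex $u$ rather than on $u_1$ alone (so the ball of radius $2\eta$ directly yields $\|u\|_{L^3}\le 2\|U\fy_1+i\fy_2\|_{L^3}$ without your separate bootstrap), and proves the Lipschitz bound by passing to the limit in the inductive estimate $\|(\Phi_\fy)^n(0)-(\Phi_\psi)^n(0)\|_{H^s_r}\le 2\|\ti\fy-\ti\psi\|_{H^s_r}$ rather than by subtracting fixed-point equations and absorbing as you do.
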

Note that $g(0)=0$ because $M(0)=0$, so $\|g(\psi)\|_{H^s_r}\le C\|\psi\|_{H^s_r}$. 
\begin{proof}
For $\fy,u\in H^1(\R^3)$, let $\ti\fy:=U\fy_1+i\fy_2$ and $\Phi_\fy(u):=\ti\fy+(2-\De)^{-1}|u|^2$. Then 
$U\fy=M(u)$ is equivalent to $u=\Phi_\fy(u)$. 
First we show that $\Phi_\fy$ is a contraction on 
\EQ{
 K_\eta:=\{u\in H^1(\R^3) \mid \|u\|_{L^3(\R^3)}\le 2\eta\},}
provided that $\|\ti\fy\|_{L^3(\R^3)}\le\eta$ and $\eta>0$ is small enough. 
For any $r\in(3/2,\I)$, and any $u\in H^1_r(\R^3)\cap L^3(\R^3)$, we have 
\EQ{
 \|(2-\De)^{-1}|u|^2\|_{H^1_r} \sim \||u|^2\|_{H^{-1}_r} \lec \||u|^2\|_{L^{\bar r}} \le \|u\|_{L^3}\|u\|_{L^r},}
where $\bar r\in(1,\I)$ is determined by $1/\bar r=1/r+1/3$. Restricting $r$ to the closed interval $[2,6]$ allows us to take the implicit constants independent of $r$. 
In particular, choosing $r=2,3$, we deduce that $\Phi_\fy$ maps $K_\eta$ into itself if $\eta>0$ is small enough. The difference is estimated in the same way as 
\EQ{
 \|\Phi_\fy(u)-\Phi_\fy(w)\|_{H^1_r} \lec \||u|^2-|w|^2\|_{L^{\bar r}} \le \|u+w\|_{L^3}\|u-w\|_{L^r}.}
Hence $\Phi_\fy$ is a contraction on $K_\eta$ for small $\eta>0$, and so there is a unique fixed point $u\in K_\eta$, which is the unique solution of $U\fy=M(u)$ in $K_\eta$. Since $\|\ti\fy\|_{L^3}\lec\|\fy\|_{L^3}\le\eta_*$, choosing $\eta_*>0$ small enough ensures the smallness of $\|\ti\fy\|_{L^3}\le\eta$. 
Thus the map $g:B_*\to H^1$ is well-defined and given by the iteration limit
\EQ{
 g(\fy)=\lim_{n\to\I}(\Phi_\fy)^n(0),}
in $K_\eta$. By the same estimate as above, we have for any $\fy,\psi\in B_*$ and any $u,w\in K_\eta$,
\EQ{
 \|\Phi_\fy(u)-\Phi_\psi(w)\|_{H^s_r} \le \|\ti\fy-\ti\psi\|_{H^s_r}+C\eta\|u-w\|_{L^r}.}
Choosing $\eta_*>0$ small ensures that $C\eta<1/2$ on the right side. Then by induction on $n\in\N$, we obtain  
\EQ{
 \|(\Phi_\fy)^n(0)-(\Phi_\psi)^n(0)\|_{H^s_r} \le 2\|\ti\fy-\ti\psi\|_{H^s_r}.}
and then sending $n\to\I$, 
\EQ{
 \|g(\fy)-g(\psi)\|_{H^s_r} \le 2\|\ti\fy-\ti\psi\|_{H^s_r} \lec \|\fy-\psi\|_{H^s_r},}
as claimed.
\end{proof}

By the above lemma, it suffices to solve the equation \eqref{eq:ze} coupled with $u=g(\z)$, namely
\EQ{ \label{eq:ze-g}
 i\p_t\z - H\z = N(g(\z))}
for large $t>0$, since we are looking for solutions $u$ in $H^1(\R^3)$ decaying in $L^3(\R^3)$ by scattering. 
In order to prove Theorem \ref{thm:main2},
we prepare Lemma \ref{lem:a.s.linear02} and Lemma \ref{lem:contraction2}
that correspond to Lemma \ref{lem:a.s.linear0}
and Lemma \ref{lem:contraction} for \eqref{eq:NLS}.

\begin{lem}
\label{lem:a.s.linear02}
Let $a\in(0,\I)$, $b\in(2,4)$ and $\e\ge 0$ satisfy \eqref{cond eps}. 
Then for any $T>0$, $\al\in[\max(a,b),\I)$ and $\varphi\in\dot{H}^{1}(\R^{3})$,
we have 
\begin{align}
 \|\LR{\na}e^{-itH}\fy^{\om,1}\|_{L^\al_\om(\Omega;X^{a,b}_\e(I_T))} \lec \sqrt\al \e_0^{-1/\al}T^{-\e_0}\|\varphi\|_{\dot{H}^{1}(\R^{3})},
\end{align}
where $\LR{\na}:=\sqrt{2-\De}$, $\fy^{\om,1}$ is defined in \eqref{random-dotH1}, and $\e_0$ is defined in \eqref{cond eps}. 
\end{lem}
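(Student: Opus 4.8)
\emph{Proof plan.} The plan is to transcribe the proof of Lemma~\ref{lem:a.s.linear0}, the one genuinely new feature being the negative power of $U$ that the low-frequency gain in \eqref{ineq:H-est1} forces on us. First I would set $\psi:=|\na|\fy\in L^2(\R^3)$, so that $\|\psi\|_{L^2}=\|\fy\|_{\dot H^1}$ and $\fy^{\om,1}=|\na|^{-1}\sum_{k}G_k\chi_k\psi$ by \eqref{random-dotH1}. Since $\LR{\na}|\na|^{-1}=U^{-1}$ as Fourier multipliers, and $\LR{\na}$, $|\na|^{-1}$, $e^{-itH}$ commute with each other and with the constant $G_k$,
\[
 \LR{\na}e^{-itH}\fy^{\om,1}=\sum_{k\in\Z^3}G_k\,U^{-1}e^{-itH}(\chi_k\psi).
\]
From here the probabilistic part is verbatim Lemma~\ref{lem:a.s.linear0}: reduce to $a\ge2$ via the embedding $X^{2,b}_{\e_1}(I_T)\subset X^{a,b}_\e(I_T)$ (the small loss $\e_0^{1/2-1/a}T^{-\e_0/2}$ being absorbed at the end), use Minkowski to take $L^\al_\om$ outside the space-time norm ($\al\ge\max(a,b)$), apply Lemma~\ref{cor:LDE} to replace the $L^\al_\om$-norm of $\sum_k G_k(\cdot)$ by $\sqrt\al$ times its $\ell^2_k$-norm, and Minkowski once more to pull $\ell^2_k$ out ($a,b\ge2$). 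This reduces everything to a bound on $\|U^{-1}e^{-itH}(\chi_k\psi)\|_{X^{a,b}_\e(I_T)}$ that is uniform in $k$ and ends in $\|\chi_k\psi\|_{L^2_x}$.

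For a single $k$, I would commute $U^{-1}e^{-itH}=e^{-itH}U^{-1}$ and invoke \eqref{ineq:H-est1} with exponent $b$; its built-in factor $U^{1/2-1/b}$ merges with $U^{-1}$ to produce, with $s:=\tfrac12+\tfrac1b\in(\tfrac34,1)$,
\[
 \|U^{-1}e^{-itH}(\chi_k\psi)\|_{L^b_x}\lec t^{-3(\frac12-\frac1b)}\,\|U^{-s}(\chi_k\psi)\|_{L^{b'}_x}.
\]
The crux is then the elementary-looking but genuinely new estimate $\|U^{-s}g\|_{L^{b'}(\R^3)}\lec\|g\|_{L^2(\R^3)}$ for $g$ supported in a fixed ball, with constants uniform for $b$ in compact subsets of $(2,4)$. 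I would prove it by the splitting $U^{-s}=\chi_0(\na)U^{-s}+(1-\chi_0(\na))U^{-s}$ with $\chi_0\in C_c^\I$ equal to $1$ near the origin: the high-frequency piece has a smooth symbol of order $0$, hence is bounded on $L^{b'}$; the low-frequency piece has symbol $m_0(\xi)|\xi|^{-s}$ with $m_0\in C_c^\I$, so its kernel is dominated by $|x|^{-(3-s)}$ and, by Hardy--Littlewood--Sobolev, it maps $L^p\to L^{b'}$ with $\tfrac1p=\tfrac1{b'}+\tfrac s3=\tfrac76-\tfrac2{3b}$; finally the bounded support lets one pass from $L^2$ to $L^{b'}$ and to $L^p$. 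This is precisely where $b<4$ is used: it is what keeps $p>1$, so that HLS applies.

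Granting $\|U^{-1}e^{-itH}(\chi_k\psi)\|_{L^b_x}\lec t^{-(d/2-d/b)}\|\chi_k\psi\|_{L^2_x}$ with $d=3$, the time integration $\|t^{\e-(d/2-d/b)}\|_{L^a_t(I_T)}$ is handled exactly as in Lemma~\ref{lem:a.s.linear0} via \eqref{cond eps} (which gives $(d/2-d/b)-\e=\e_0+1/a$) and \eqref{t power norm}, producing the asserted powers of $T$ and $\e_0$; the embedding loss from the $a<2$ reduction is absorbed there. Summing $\|\chi_k\psi\|_{L^2_x}$ over $k$ in $\ell^2$ gives $\|\psi\|_{L^2}=\|\fy\|_{\dot H^1}$, since $\sum_k|\chi_k|^2\sim1$, which is the claim. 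The only real obstacle is the mapping property of the negative power $U^{-s}$ into $L^{b'}$ on compactly supported data; everything else is a faithful copy of the $e^{it\De}$ argument.
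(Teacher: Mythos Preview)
Your proposal is correct and follows essentially the same route as the paper's proof. Both set $\psi=|\nabla|\varphi$, rewrite $\langle\nabla\rangle e^{-itH}\varphi^{\omega,1}=\sum_k G_k\,e^{-itH}U^{-1}(\chi_k\psi)$, carry out the probabilistic reduction exactly as in Lemma~\ref{lem:a.s.linear0}, apply the dispersive estimate \eqref{ineq:H-est1} to land on $\|U^{-s}(\chi_k\psi)\|_{L^{b'}}$ with $s=\tfrac12+\tfrac1b$, and finish with the time integration via \eqref{t power norm}. The only visible difference is in the treatment of $\|U^{-s}(\chi_k\psi)\|_{L^{b'}}$: the paper invokes the Sobolev embedding $\dot H^{s}_{\bar b}(\R^3)\subset L^{b'}(\R^3)$ in one line to pass to $\|\chi_k\psi\|_{L^{\bar b}}$ (with $\tfrac1{\bar b}=\tfrac76-\tfrac{2}{3b}$) and then uses compact support, whereas you make the underlying high/low frequency split explicit and then apply Hardy--Littlewood--Sobolev on the low piece. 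Your exponent $p$ is exactly the paper's $\bar b$, and both identify the constraint $b<4$ as what keeps $\bar b>1$. Your version is arguably a bit more honest, since $U^{-s}=|\nabla|^{-s}\langle\nabla\rangle^{s}$ is not literally $|\nabla|^{-s}$ and the one-line Sobolev step tacitly uses the boundedness of the high-frequency part; but the substance is identical.
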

\begin{proof}
If $a<2$, then let $a_1:=2$ and $\e_1:=\e+1/a-1/2+\e_0/2$. Otherwise, let $a_1:=a$ and $\e_1:=\e$. 
By the same argument as in the proof of Lemma \ref{lem:a.s.linear0} using Minkowski's inequality and Lemma \ref{lem:H-U-P}, 
it holds that
\EQ{
&\nor{\LR{\na}e^{-itH}\varphi^{\omega,1}}_{L^{\alpha}_{\omega}(\Omega;X^{a_1,b}_{\e_1}(I_{T}))}
\\&=\nor{\textstyle\sum_{k\in\Z^{3}} G_{k}\sqb{e^{-itH}U^{-1}(\psi_{k}|\nabla|\varphi)}}_{L^{\alpha}_{\omega}(\Omega;X^{a_1,b}_{\e_1}(I_{T}))}
\\& 
\lec\sqrt{\alpha}\nor{\nor{e^{-itH}U^{-1}(\psi_{k}|\nabla|\varphi)}_{X^{a_1,b}_{\e_1}(I_{T})}}_{\ell^{2}_{k}(\Z^{3})}
\\&
\lesssim\sqrt{\alpha}\nor{t^{\epsilon_1-d/2-d/b}}_{L^{a_1}_{t}(I_{T})}
	\nor{\big\|U^{-1/2-1/b}\psi_{k}|\nabla|\varphi\big\|_{L^{b'}_{x}(\R^{d})}}_{\ell^{2}_{k}(\Z^{3})}.}
The last norm is estimate by the Sobolev embedding $\dot H^{1/2+1/b}_{\bar{b}}(\R^3)\subset L^{b'}(\R^3)$ with $\bar{b}\in(1,\I)$ defined by
\EQ{
 \frac{1}{\bar{b}}:=\frac{1}{b'}+\frac13\sqb{\frac12+\frac1b}=\frac 76-\frac{2}{3b},} 
\EQ{
& \nor{\big\|U^{-1/2-1/b}\psi_{k}|\nabla|\varphi\big\|_{L^{b'}_{x}(\R^{d})}}_{\ell^{2}_{k}(\Z^{3})}
\\&	\lec\nor{\big\|\psi_{k}|\nabla|\varphi\big\|_{L^{\bar{b}}_{x}(\R^{d})}}_{\ell^{2}_{k}(\Z^{3})}
  \lec \||\na|\fy\|_{L^2(\R^3)}.}
Then using \eqref{t power norm}, together with \eqref{emb X} if $a<2$, leads to the desired estimate. 
\end{proof}
\begin{rem}
If we do not exploit the low frequency gain $U^{1/2-1/b}$ for $e^{-itH}$ in the above proof at all, then the upper bound on $b$ becomes $b<3$, which is related to the critical Sobolev embedding $\dot H^1_3(\R^3)\not\subset L^\I(\R^3)$. On the other hand, we need $\e>1/4$ to treat the quadratic nonlinear terms, which can be easily seen by a  scaling argument, but $\e>1/4$ is equivalent to $b>3$ in the above lemma. 
This implies that we can not prove Theorem \ref{thm:main2} simply as for \eqref{eq:NLS} without using the low frequency gain.  
\end{rem}

The next lemma is the main deterministic claim in this section. 
\begin{lem}
\label{lem:contraction2}
Let $1/4<\e<3/8$. Then there exists $\eta_{0}>0$ such that the following holds. 
Let $(q,r)\in(2,\I)\times[12/5,4]$ be an admissible pair on $\R^3$. 
Then for any $\eta\in(0,\eta_0]$, $\z_+\in H^1(\R^3)$ and $T\ge 1$ satisfying 
\EQ{ \label{ze assmall}
 \|\LR{\na}e^{-itH}\z_+\|_{X^{q,r}_\e(I_T)} + \|e^{-itH}\z_+\|_{X^{\I,3}_0(I_T)}\le \eta,}
there exists a unique local solution $\z\in C_t(I_T;B_*)$ to \eqref{eq:ze-g} on $I_T$ such that 
\EQ{ \label{ze Str}
 \|\LR{\na}(\z - e^{-itH}\z_+)\|_{X^{\I,2}_\e(I_T)\cap X^{q,r}_\e(I_T)} \le \eta,}
where $B_*$ is the domain of $g$ defined in Lemma \ref{lem:z-u}. 

\end{lem}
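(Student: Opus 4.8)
The plan is to run the contraction scheme of Lemma~\ref{lem:contraction}, now for the Duhamel form of \eqref{eq:ze-g}, with two extra ingredients: the quadratic substitution $u=g(\z)$ from Lemma~\ref{lem:z-u} sitting inside the nonlinearity, and the derivative $\LR{\na}$, which must be carried through $N$. Write $\z^0:=e^{-itH}\z_+$ and $\z^1:=\z-\z^0$; then \eqref{eq:ze-g} together with scattering of $\z$ to $\z_+$ is equivalent to $\z^1=\Phi(\z^1)$, where $\Phi(v):=i\int_t^\I e^{-i(t-s)H}N(g(\z^0+v))\,ds$. I would look for a fixed point of $\Phi$ in the closed $\eta$-ball $B_\eta$ of the Banach space $X_T$ of functions $v$ with $\LR{\na}v\in X^{\I,2}_\e(I_T)\cap X^{q,r}_\e(I_T)$, normed by $\|\LR{\na}v\|_{X^{\I,2}_\e(I_T)\cap X^{q,r}_\e(I_T)}$ (which, since $t^\e\ge1$ on $I_T$, also dominates $\|v\|_{C_t(I_T;H^1)}$), and then set $\z=\z^0+\z^1$.

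First I would record which norms of the reference free solution $\z^0$ are available: by \eqref{ze assmall} only the weighted bound $\|\LR{\na}\z^0\|_{X^{q,r}_\e(I_T)}\le\eta$ and the \emph{unweighted} bound $\|\z^0\|_{X^{\I,3}_0(I_T)}\le\eta$, since $\z^0\notin X^{\I,2}_\e(I_T)$ in general; so only these norms and their Sobolev consequences may be used on the $\z^0$-factors below. For $v\in B_\eta$, $H^1(\R^3)\subset L^3(\R^3)$ and $t^\e\ge1$ give $\|v(t)\|_{L^3}\lec\|\LR{\na}v(t)\|_{L^2}\le\|\LR{\na}v\|_{X^{\I,2}_\e(I_T)}\le\eta$, so $\|(\z^0+v)(t)\|_{L^3}\lec\eta\le\eta_*$ for $\eta_0$ small and $g$ applies. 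Writing $u:=g(\z^0+v)$, Lemma~\ref{lem:z-u} (with $g(0)=0$ and constants uniform for $r\in[12/5,4]\subset[2,6]$) gives, pointwise in $t$, $\|\LR{\na}u(t)\|_{L^\rho_x}\lec\|\LR{\na}(\z^0+v)(t)\|_{L^\rho_x}$ for $\rho\in\{2,r\}$ and $\|u(t)\|_{L^3_x}\lec\|(\z^0+v)(t)\|_{L^3_x}$; taking space-time norms and using Sobolev then yields $\|\LR{\na}u\|_{X^{q,r}_\e(I_T)}\lec\eta$, $\|u\|_{X^{q,\rho}_\e(I_T)}\lec\eta$ for every $\rho$ with $H^1_r(\R^3)\subset L^\rho(\R^3)$, and $\|u\|_{X^{\I,3}_0(I_T)}\lec\eta$.

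For the nonlinear estimate I would apply the weighted Strichartz Lemma~\ref{lem:t-Strichartz 4H} to $\LR{\na}\Phi(v)$ (since $\LR{\na}$ commutes with $e^{-i(t-s)H}$), reducing to bounding $\LR{\na}N(u)$ in a sum of dual weighted Strichartz spaces $X^{q_1',r_1'}_\e(I_T)$ over admissible $(q_1,r_1)$. Split $N=N_{\mathrm q}+N_{\mathrm c}$ with $N_{\mathrm q}=2u_1^2-4iH^{-1}\na\cdot(u_1\na u_2)$ and $N_{\mathrm c}=|u|^2u_1-iH^{-1}\na\cdot\na(|u|^2u_2)$; here $\LR{\na}H^{-1}\na\cdot$ is a constant multiple of the Riesz transform and $\LR{\na}H^{-1}\na\cdot\na$ has symbol $-|\xi|$ with $|\na|\LR{\na}^{-1}$ bounded, so both are bounded on the relevant $L^p$. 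The fractional Leibniz rule then gives $\|\LR{\na}N_{\mathrm q}(u)\|_{L^{r/2}_x}\lec\|u\|_{L^r_x}\|\LR{\na}u\|_{L^r_x}$, and H\"older in $t$ against $t^{-\e}\in L^4_t(I_T)$ — finite, with norm $\sim T^{1/4-\e}\le1$ for $T\ge1$ \emph{precisely because $\e>1/4$} — reproduces the estimate of Lemma~\ref{lem:contraction} with $p=1$, $d=3$: $\|\LR{\na}N_{\mathrm q}(u)\|_{X^{q_1',r/2}_\e(I_T)}\lec T^{1/4-\e}\|u\|_{X^{q,r}_\e(I_T)}\|\LR{\na}u\|_{X^{q,r}_\e(I_T)}\lec\eta^2$ for the admissible pair $(q_1,(r/2)')$. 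For $N_{\mathrm c}$, one puts a single $u$-factor in the small unweighted $X^{\I,3}_0(I_T)$ and the others in weighted norms — trading, on the $\z^0$-part, the derivative for integrability via Sobolev so that the spatial exponents remain in the window dictated by $r\in[12/5,4]$ — and, this term being of higher order, obtains a bound $\lec\eta^3$ times a non-positive power of $T$, hence $\lec\eta^3$. Combining, $\|\Phi(v)\|_{X_T}\le C(\eta^2+\eta^3)\le\eta$ for $\eta_0$ small, and the difference estimate $\|\Phi(v^0)-\Phi(v^1)\|_{X_T}\lec(\eta+\eta^2)\|v^0-v^1\|_{X_T}$ follows in the same way from the Lipschitz property of $g$ in $H^s_r$ and the (multi)linearity of $N$. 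Banach's fixed-point theorem gives the unique $\z^1\in B_\eta$, so $\z=\z^0+\z^1\in C_t(I_T;B_*)$ ($B_*$ being closed in $H^1$) solves \eqref{eq:ze-g} and obeys \eqref{ze Str}; conversely, the $(\I,2)$-component of \eqref{ze Str} forces $\|\z(t)-\z^0(t)\|_{H^1}=O(t^{-\e})$, so any such $\z$ satisfies $\z^1=\Phi(\z^1)$ with $\z^1\in B_\eta$, which yields the uniqueness.

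The hard part will be the quadratic estimate carrying the extra derivative: it is the borderline case $p=1$, $d=3$ of the scheme in Lemma~\ref{lem:contraction}, where the time weight supplies only the gain $T^{1/4-\e}$, so closing the argument genuinely needs $\e>1/4$; simultaneously one has to check that distributing $\LR{\na}$ through $N$ by the fractional Leibniz rule, together with the $L^p$-boundedness of $\LR{\na}H^{-1}\na\cdot$ and $\LR{\na}H^{-1}\na\cdot\na$, keeps every Lebesgue exponent inside the admissible window forced by $r\in[12/5,4]$, and that for $N_{\mathrm c}$ this can still be arranged even though $\z^0$ carries no time-weighted $L^\I$ bound, by using its small unweighted $L^3$ norm on one factor and Sobolev on the others — all of which rests on the uniform-in-$r$ Lipschitz estimate for $g$ in Lemma~\ref{lem:z-u}.
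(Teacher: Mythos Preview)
Your proposal is correct and follows essentially the same route as the paper: set up a contraction for the difference $\z^1=\z-\z^0$ (the paper equivalently iterates $\z$ itself on the affine ball around $\z^0$), transfer the norms from $\z$ to $u=g(\z)$ via Lemma~\ref{lem:z-u}, split $N$ into its quadratic and cubic pieces, estimate both in $X^{q/2,r/2}_{2\e}(I_T)$ using the product/Leibniz rule together with the $L^p$-boundedness of $\LR{\na}H^{-1}\na$ and $H^{-1}\De$, and then use H\"older in $t$ to reach the dual Strichartz space with the gain $T^{1/4-\e}$. The paper handles the cubic term exactly as you suggest, placing one factor in the unweighted $X^{\I,3}_0$ (via the Sobolev step $\|u^2\|_{L^r}\lec\|u\|_{L^3}\|\na u\|_{L^r}$) and the remaining two in $X^{q,r}_\e$, obtaining the same power $T^{1/4-\e}\le 1$; so your ``non-positive power of $T$'' is precisely $1/4-\e$.
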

\begin{proof}
The equation \eqref{eq:ze-g} for $\z$ with the final-data $\z_+$ is rewritten as 
\EQ{
 \z = e^{-itH}\z_+ + i\int_t^\I e^{-i(t-s)H}N(g(\z))ds,}
as long as $\z$ stays in $B_*$. Hence it suffices to show that the mapping $\Psi$ defined by 
\EQ{
 \z^0:=e^{-itH}\z_+, \pq \Psi(w) := \z^0 + i\int_t^\I e^{-i(t-s)H}N(g(\z))ds }
is a contraction on the following closed non-empty set in a Banach space: 
\EQ{
 Y_T := \{w\in C(I_T;B_*),\ \|\LR{\na}(w-\z^0)\|_{X^{\I,2}_\e(I_T)\cap X^{q_1,r_1}_\e(I_T)}\le\eta\},}
endowed with the norm $\|\LR{\na}\cdot\|_{X^{\I,2}_\e(I_T)\cap X^{q_1,r_1}_\e(I_T)}$, provided that $\eta_0>0$ is small. 

First we impose $\eta_0\le\eta_*$, where $\eta_{*}$ is as in Lemma \ref{lem:z-u}, then $\z^0\in Y_T$. 
Next to see that $\Psi$ maps $Y_T$ into itself, let $w\in Y_T$ and $u:=g(w)$. 
Then Lemma \ref{lem:z-u} implies that $u\in C(I_T;H^1(\R^3))$, and for all $t\in I_T$, $s\in[0,1]$ and $r\in[2,6]$, 
 we have $\|u(t)\|_{H^s_r(\R^3)} \lec \|w(t)\|_{H^s_r(\R^3)}$. 
In particular, 
\EQ{ \label{est z2u}
 \|\LR{\na}u\|_{X^{q_1,r_1}_\e(I_T)} \lec \|\LR{\na}w\|_{X^{q_1,r_1}_\e(I_T)},}
and, using $H^1(\R^3)\subset L^3(\R^3)$, 
\EQ{ \label{u L3 small}
 \|u\|_{X^{\I,3}_0(I_T)} \lec \|\z^0\|_{X^{\I,3}_0(I_T)} + \|\LR{\na}(w-\z^0)\|_{X^{\I,2}_0(I_T)} \lec \eta.}

On the nonlinear terms, using the Sobolev norm $\|\fy\|_{H^1_r}=\|\LR{\na}\fy\|_{L^r}\sim\|\fy\|_{L^r}+\|\na\fy\|_{L^r}$ 
together with H\"older's inequality as well as the boundedness of $|\na|^{-1}\na$ on $L^r$ (for $1<r<\I$), the quadratic part is estimated at each $t\in I_T$ for $j=0,1$ by 
\EQ{
 &\|u_1^2\|_{H^1_{r/2}} \lec \|u^2\|_{L^{r/2}}+\|u\na u\|_{L^{r/2}}
  \lec\|u\|_{H^1_{r}}^2, 
 \\& \|H^{-1}\na\cdot(u_1\na u_2)\|_{H^1_{r/2}} \lec \|u\na u\|_{L^{r/2}} \lec \|u\|_{H^1_r}^2.}
For the cubic part, we also use the Sobolev inequality:
\EQ{
 \|u^2\|_{L^{r}} \lec \|\na u^2\|_{L^{\bar r}} \lec \|u\|_{L^3}\|\na u\|_{L^{r}},}
where $\bar r\in[6/5,2)$ is defined by $1/\bar r=1/r+1/3$. Then at each $t\in I_T$, we have
\EQ{
 &\||u|^2u_1-iH^{-1}\De(|u|^2u_2)\|_{H^1_{r/2}} \lec \||u|^2u\|_{H^1_{r/2}}
 \\&\lec \|u^3\|_{L^{r/2}} + \|u^2\na u\|_{L^{r/2}}
 \lec \|u^2\|_{L^{r}}(\|u\|_{L^{r}}+\|\na u\|_{L^{r}})
 \\&\lec \|u\|_{L^3} \|\na u\|_{L^{\bar r}} \|u\|_{H^1_{r}} 
 \lec \|u\|_{L^3}\|u\|_{H^1_{r}}^2.}
Then using H\"older in $t$ and \eqref{u L3 small}, we obtain, assuming $\eta_0\le 1$,  
\EQ{
 \|\LR{\na}N(u)\|_{\ti X_T} \lec \|\LR{\na}u\|_{X_T}^2,}
where $\ti X_T:=X^{2q,2r}_{2\e}(I_T)$ and $X_T:=X^{q,r}_\e(I_T)$. 
Since \eqref{dual qr} holds with $d=3$, $p=1$, there is a unique admissible pair $(\hat q,\hat r)$ such that $\hat r'=2r$. Then from \eqref{nonlin decay}, we have $1/\hat q'-1/(2q)=1/4<\e$ and so 
\EQ{
 \|\LR{\na}N(u)\|_{\hat X_T} &\lec T^{1/4-\e}\|\LR{\na}N(u)\|_{\ti X_T} 
 \lec T^{1/4-\e} \|\LR{\na}u\|_{X_T}^2,}
where $\hat X_T:=X^{\hat q',\hat r'}_\e(I_T)$. 
Combining this with \eqref{est z2u} and Lemma \ref{lem:t-Strichartz 4H}, we obtain 
\EQ{
& \|\LR{\na}(\z^0-\Psi(w))\|_{X^{\I,2}_\e(I_T)\cap X_T}
\\& \lec T^{1/4-\e}(\|\LR{\na}\z^0\|_{X_T}+\|\LR{\na}w\|_{X_T})^2 \lec T^{1/4-\e}\eta^2,}
which is much smaller than $\eta$ if $\eta_0$ is small. Also using $H^1(\R^3)\subset L^3(\R^3)$,  
\EQ{
 \|\Psi(w)\|_{X^{\I,3}_0(I_T)} \le \eta + CT^{1/4-\e}\eta^2 \le \eta_*,}
if $\eta_0$ is small enough. Hence $\Psi$ maps $Y_T$ into itself. 

To show that it is a contraction, let $w^0,w^1\in Y_T$, $u^j:=g(w^j)$, and $u':=u^0-u^1$. 
Since $N(u)$ consists of Fourier multipliers, product and sum, $N(u^0)-N(u^1)$ is expanded into similar quadratic and cubic terms in $u^0,u^1$ and $u'$, which are at least linear in $u'$ (and $\bar u'$). 
Hence by the same argument, we obtain 
\EQ{
 \|\LR{\na}(N(u^0)-N(u^1))\|_{\ti X_T}
 \lec \eta\|\LR{\na}u'\|_{X^{\I,2}_0(I_T)\cap X_T},}
where the $X^{\I,2}_0(I_T)$ norm is used to bound $\|u'\|_{L^3_x}$ by Sobolev. 
By Lemma \ref{lem:z-u}, we have 
\EQ{
 \|\LR{\na}u'\|_{X^{\I,2}_0(I_T)\cap X_T} \lec \|\LR{\na}(w^0-w^1)\|_{X^{\I,2}_0(I_T)\cap X_T}.}
Then using the same linear estimates as above, we obtain 
\EQ{
 & \|\LR{\na}(\Psi(w^0)-\Psi(w^1))\|_{X^{\I,2}_\e(I_T)\cap X_T}
 \\&\lec T^{1/4-\e}\|\LR{\na}(N(u^0)-N(u^1))\|_{\ti X_T}
 \\&\lec T^{1/4-\e}\eta\|\LR{\na}(w^0-w^1)\|_{X^{\I,2}_\e(I_T)\cap X_T}.}
Taking $\eta_0>0$ smaller if necessary, we deduce that $\Psi$ is a contraction on $Y_T$, so there is a unique fixed point $w\in Y_T$, which is the unique solution.
\end{proof}

\begin{rem}
Since we have $v=\z-\LR{\na}^{-1}|\na|^{-1}|u|^{2}$ for $v=U^{-1}u_{1}+iu_{2}$,
it follows that
\EQ{
\|v-e^{-itH}\z_{+}^{\omega,1}\|_{H^{1}(\R^{3})}
&\lec \|\z-e^{-itH}\z_{+}^{\omega,1}\|_{H^{1}(\R^{3})}+\||u|^{2}\|_{\dot{H}^{-1}(\R^{3})}.}
Combining that
\EQ{
\||\na|^{-1}|u|^{2}\|_{X^{\infty,2}_{0}(I_{T})}
&\lec \||u|^{2}\|_{X^{\infty,6/5}_{0}(I_{T})}
\lec T^{-\epsilon}\|u\|_{X^{\infty,2}_{\epsilon}(I_{T})}\|u\|_{X^{\infty,3}_{0}(I_{T})}
\\&\lec T^{-\epsilon}\|\LR{\na}w\|_{X^{\infty,2}_{\epsilon}(I_{T})}\|u\|_{X^{\infty,3}_{0}(I_{T})}
\lec T^{-\epsilon}\eta^{2},}
we obtain that $\|v-e^{-itH}\z_{+}^{\omega,1}\|_{H^{1}(\R^{3})}=o(t^{-\epsilon})$ as $t\to\infty$.
\end{rem}

From Lemmas \ref{prop:random-L2-range}, \ref{lem:a.s.linear02} and \ref{lem:contraction2},
we can prove Theorem \ref{thm:main2}.

\begin{proof}[Proof of Theorem \ref{thm:main2}] 
Take any $\e\in(1/4,3/8)$ and an admissible pair $(q,r)\in(2,\I)\times[12/5,4]$ as in Lemma \ref{lem:contraction2}, and let $\eta_{0}>0$ be given by the lemma. Then 
\EQ{
 \e_0:=-\e+3/2-3/r-1/q=-\e+1/q_0>0,}
so by Lemma \ref{lem:a.s.linear02},
for all $\z_{+}\in \dot{H}^{1}(\R^{3})$ and for almost every $\om\in\Omega$, 
we have $e^{-itH}\z_{+}^{\omega,1}\in X^{q,r}_\e(I_1)$. 
Moreover, since $\z_{+}^{\om,1}\in H^{1}(\R^{d})$ for almost every $\omega\in\Omega$, we have
\begin{align}
&
\lim_{T\to\infty}\nor{e^{-itH}\z_{+}^{\om,1}}_{X^{\infty,3}_{0}(I_{T})}=0
\end{align}
by Lemma \ref{lem:H-U-P} together with the Sobolev embedding $H^1(\R^3)\subset L^3(\R^3)$. 
Therefore, for any $\eta\in(0,\eta_0)$ and for almost every $\omega\in\Omega$, 
there exists $T>0$ such that 
\EQ{
  \|\LR{\na}e^{-itH}\z_+^{\om,1}\|_{X^{q,r}_\e(I_T)} + \|e^{-itH}\z_+^{\om,1}\|_{X^{\I,3}_0(I_T)}\le \eta,}
thus by Lemma \ref{lem:contraction2}, there exists a unique solution
$\z\in C(I_T;B_*)$ to \eqref{eq:ze-g} on $I_{T}$ satisfying
\EQ{
  \|\LR{\na}(\z - e^{-itH}\z_+^{\om,1})\|_{X^{\I,2}_\e(I_T)\cap X^{q,r}_\e(I_T)} \le \eta.}
Applying the weighted Strichartz estimate once again to $\Psi(\z)$, we obtain the same estimate on all the other admissible norms. 
In particular, it is scattering in $H^{1}(\R^{3})$ with the final-state $\z_+^{\om,1}$. 
Since $u:=g(z)\in C(I_T;H^1(\R^3))$ is a solution of \eqref{eq:GP-u} on $I_T$, the global well-posedness in \cite{B-S} implies that $u$ is extended uniquely to a global solution of \eqref{eq:GP-u} in $C(\R;H^1(\R^3))$. 
\end{proof}
We can prove that $v=U^{-1}u_1+iu_2$ remains in the Strichartz space after the extension of $u$ to all $t\in\R$,  namely $\LR{\na}v\in L^q_{t,loc}L^r_x(\R\times\R^d)$, which is a stronger condition than that implied by the global well-posedness for $u(t)\in H^1(\R^3)$. The detail is however omitted, since the main question in this paper is the uniqueness, for which the weaker condition is better. 


\end{document}